\newcommand{\N}{{\mathds{N}}}
\newcommand{\Z}{{\mathds{Z}}}
\newcommand{\Q}{{\mathds{Q}}}
\newcommand{\R}{{\mathds{R}}}
\newcommand{\C}{{\mathds{C}}}
\newcommand{\T}{{\mathds{T}}}
\newcommand{\D}{{\mathfrak{D}}}
\newcommand{\A}{{\mathfrak{A}}}
\newcommand{\B}{{\mathfrak{B}}}
\newcommand{\Lip}{{\mathsf{L}}}
\newcommand{\solenoid}[1]{\mathcal{S}_{\,#1}}
\newcommand{\dpropinquity}[1]{{\mathsf{\Lambda}^\ast_{#1}}}
\newcommand{\covpropinquity}[1]{{\mathsf{\Lambda}^{\mathrm{cov}}_{#1}}}
\newcommand{\Kantorovich}[1]{{\mathsf{mk}_{#1}}}
\newcommand{\KantorovichDist}[3]{{\mathrm{mkD}_{#1}\left({#2},{#3}\right)}}
\newcommand{\Haus}[1]{{\mathsf{Haus}_{#1}}}
\newcommand{\StateSpace}{{\mathscr{S}}}
\newcommand{\MongeKant}{{Mon\-ge-Kan\-to\-ro\-vich metric}}
\newcommand{\Lqcms}{{\JLL} quantum compact metric space}
\newcommand{\Qqcms}[1]{{$#1$}--\gQqcms}
\newcommand{\gQqcms}{quasi-Leibniz quantum compact metric space}
\newcommand{\qcms}{quantum compact metric space}
\newcommand{\lcqms}{quantum locally compact metric space}
\newcommand{\unit}{1}
\newcommand{\sa}[1]{{\mathfrak{sa}\left({#1}\right)}}
\newcommand{\UIso}[4]{{\mathsf{UIso}_{#1}\left({#2}\rightarrow{#3}\middle\vert{#4}\right)}}
\newcommand{\JLL}{Lei\-bniz}
\newcommand{\bridge}[1]{#1} 
\newcommand{\dom}[1]{{\operatorname*{dom}\left({#1}\right)}}
\newcommand{\codom}[1]{{\operatorname*{codom}\left({#1}\right)}}
\newcommand{\norm}[2]{\left\|{#1}\right\|_{#2}}
\newcommand{\tunnelset}[4]{{\text{\calligra Tunnels}\,\left[{#1}\stackrel{#3}{\longrightarrow}{#2}\middle\vert {#4} \right]}}
\newcommand{\tunnelsetc}[3]{{\text{\calligra Tunnels}\,\left[\left({#1}\right) \stackrel{#3}{\longrightarrow} \left({#2}\right) \right]}}
\newcommand{\bridgereach}[2]{{\varrho\left(\bridge{#1}\middle\vert {#2}\right)}}
\newcommand{\bridgeheight}[2]{{\varsigma\left(\bridge{#1}\middle\vert{#2}\right)}}
\newcommand{\bridgelength}[2]{{\lambda\left(\bridge{#1}\middle\vert{#2}\right)}}
\newcommand{\bridgenorm}[2]{{\mathsf{bn}_{ \bridge{#1}  }\left({#2}\right)}}
\newcommand{\worknote}[1]{} 
\newcommand{\opnorm}[3]{{\left|\mkern-1.5mu\left|\mkern-1.5mu\left| {#1} \right|\mkern-1.5mu\right|\mkern-1.5mu\right|_{#3}^{#2}}}
\newcommand{\tunnelreach}[2]{{\rho\left({#1}\middle\vert{#2}\right)}}
\newcommand{\tunnelmagnitude}[2]{{\mu\left({#1}\middle\vert{#2}\right)}}
\newcommand{\tunnelextent}[1]{{\chi\left({#1}\right)}}
\newcommand{\alg}[1]{{\mathfrak{#1}}}
\newcommand{\dil}[1]{{\mathrm{dil}\left({#1}\right)}}
\theoremstyle{plain}
\newtheorem{theorem}{Theorem}[section]
\newtheorem{corollary}[theorem]{Corollary}
\newtheorem{theorem-definition}[theorem]{Theorem-Definition}
\theoremstyle{definition}
\newtheorem{definition}[theorem]{Definition}
\newtheorem{example}[theorem]{Example}
\theoremstyle{remark}
\newtheorem{notation}[theorem]{Notation}
\renewcommand{\geq}{\geqslant}
\renewcommand{\leq}{\leqslant}
\numberwithin{equation}{section}
\begin{document}

\title[Symmetries and the Dual Propinquity]{A survey of the Preservation of Symmetries by the Dual Gromov-Hausdorff Propinquity}
\author{Fr\'{e}d\'{e}ric Latr\'{e}moli\`{e}re}
\email{frederic@math.du.edu}
\urladdr{http://www.math.du.edu/\symbol{126}frederic}
\address{Department of Mathematics \\ University of Denver \\ Denver CO 80208}

\date{\today}
\subjclass[2000]{Primary:  46L89, 46L30, 58B34.}
\keywords{Noncommutative metric geometry, Gromov-Hausdorff convergence, Monge-Kantorovich distance, quantum metric spaces, Lip-norms, proper monoids, Gromov-Hausdorff distance for proper monoids, C*-dynamical systems.}

\begin{abstract}
We survey the symmetry preserving properties for the dual propinquity, under natural non-degeneracy and equicontinuity conditions. These properties are best formulated using the notion of the covariant propinquity when the symmetries are encoded via the actions of proper monoids and groups. We explore the issue of convergence of Cauchy sequences for the covariant propinquity, which captures, via a compactness result, the fact that proper monoid actions can pass to the limit for the dual propinquity.
\end{abstract}
\maketitle



The dual propinquity, a noncommutative analogue of the Gromov-Hausdorff distance, enjoys some intrinsic symmetry-preserving properties, which is particularly valuable for the study of many types of noncommutative geometries arising from group or semigroup actions, as well as the study of physical models where symmetries play a central role. In order to capture these elusive properties, a notion of covariant propinquity can be defined, which enables a discussion of convergence of actions of proper monoids, under the mild assumption that the actions are by Lipschitz maps for an underlying quantum metric. The covariant propinquity is an interesting metric in its own right, and it contributes to our overall research program, where we seek to encapsulate within certain hypertopologies on classes of quantum structures, various basic geometric components of potential physical interest. This note surveys the covariance property of the dual propinquity observed in \cite{Latremoliere17c} and the covariant propinquity as introduced in \cite{Latremoliere18b,Latremoliere18c}.

Metric considerations in noncommutative metric geometry can be traced back to Connes' introduction of spectral triples in \cite{Connes89}, and its link with mathematical physics has been a matter of interest for a long time. Parallel to these observations, the metric theory of groups \cite{Gromov81}, and applications of metric geometric ideas to Riemannian manifolds centered around the Gromov-Hausdorff distance \cite{burago01} have proven to be very powerful tools. Moreover, the Gromov-Hausdorff distance for compact metric spaces was in fact first introduced by Edwards \cite{Edwards75}, while considering the problem of defining a geometry for Wheeler's superspace as an approach to quantum gravity \cite{Wheeler68}. In other words, the use of metric ideas, and specifically, of topologizing appropriate classes of metric spaces using some version of the Gromov-Hausdorff distance, is a theme well worth exporting to noncommutative geometry.

This endeavor was initiated by Rieffel \cite{Rieffel98a,Rieffel99,Rieffel00}, who first introduced the appropriate notion of a {\qcms}, inspired by Connes' original work, and best understood as generalizing to noncommutative geometry the metric introduced by Kantorovich \cite{Kantorovich40,Kantorovich58} in his study of Monge's transportation problem. Rieffel then defined the quantum Gromov-Hausdorff distance \cite{Rieffel00} on his class of {\qcms s}. Some very nontrivial convergences of noncommutative spaces were established, including for instance finite dimensional approximations of quantum tori \cite{Latremoliere05}.

However, defining a noncommutative analogue of the Gromov-Hausdorff distance is not a trivial matter. For instance, Rieffel's distance could be null between non-isomorphic C*-algebras, as long as these are endowed with isometric quantum metric in a natural sense. This issue grew into more than a curiosity when the study of the behavior of structures associated to {\qcms s}, such as modules over such spaces, became a focus of the research in noncommutative metric geometry. Indeed, it became apparent that Rieffel's distance does not capture the entire C*-algebraic structure: in fact, the distance is defined on {\qcms s} built on top of order-unit spaces rather than algebras, and thus intermediate estimates of Rieffel's distance will often involve leaving the category of C*-algebras. It should however be noted that Rieffel's metric is very flexible and convenient to use, and therein lies its power; it also has a clear intuitive meaning in term of distance between states, a notion at the core of C*-algebra theory and its application to physics.

Several  interesting alternative to Rieffel's distance were introduced, often motivated by this coincidence property issue. A notable early example is due to Kerr \cite{Kerr02}, who replaced states with unital, completely positive maps from operator systems to matrix algebras. Kerr then proved that distance zero between two {\qcms s} defined over C*-algebras indeed implied that the underlying C*-algebras are *-isomorphic, in addition to the quantum metrics being isometric in the sense of Rieffel. An interesting phenomenon emerged from Kerr's work, where completeness for his metric relied on a relaxed notion of the Leibniz inequality which plays no role (nor can be made sense of) in Rieffel's notion of {\qcms s}. Yet, introducing this quasi-Leibniz condition breaks the proof of the triangle inequality as note in \cite{Kerr02}.

This problem with the Leibniz, or even relaxed Leibniz, condition, again grew from a curious observation to a recurrent issue when working, for instance, with the convergence of modules \cite{Rieffel10c}. Eventually, it became clear that one wished for a  noncommutative analogue of the Gromov-Hausdorff distance which would, at the same time, be compatible with C*-algebras, be compatible with Leibniz or quasi-Leibniz quantum metrics, and keep the underlying interpretation of Rieffel's pioneering metric in term of states. Taken altogether, these requirements meant solving the issues that plagued the field for its decade of existence.

The resolution of this challenge came in the form of our dual Gromov-Hausdorff propinquity \cite{Latremoliere13,Latremoliere13b,Latremoliere14,Latremoliere15}. The dual propinquity addresses all the above needs by working exclusively with {\qcms s} defined over C*-algebras and endowed with quasi-Leibniz quantum metrics, while being a complete metric up to full quantum isometry---meaning, in particular, that distance zero does imply *-isomorphism of the underlying C*-algebras. Since its introduction, the dual propinquity seems to have become the metric of choice for this subject. A particular version of the construction of our metric, called the quantum propinquity \cite{Latremoliere13}, which was in fact our first step toward this resolution, has proven particularly valuable.

Equipped with this new metric, we address some of the problems we hoped to study from a noncommutative metric geometric perspective, such as approximation of modules \cite{Latremoliere16c,Latremoliere17a,Latremoliere18a}, or other structures of value when eyeing applications to mathematical physics. Such structures include symmetries and dynamics, encoded by actions of groups or semigroups on {\qcms s}. It would appear very valuable to understand the interplay between convergence of {\qcms s} and convergence of some of their symmetries, for instance. Note that in some sense, such a study requires a Gromov-Hausdorff metric which does indeed discriminate between *-automorphisms.

This note surveys our progress regarding the problem of preservation of symmetries of dynamics under convergence for the dual propinquity. We approach this problem by actually taking our construction of the dual propinquity further to include entire proper monoid actions by Lipschitz maps, which requires just a small amount of changes to the basic concepts---hiding the difficulties in the proofs of the basic properties. This observation alone suggests that the dual propinquity is close to remembering something of symmetries by itself. This is indeed vindicated by a sort of Arz{\'e}la-Ascoli theorem where equicontinuity conditions ensure that convergence for the dual propinquity can be strengthened to convergence of entire monoid or group actions. The equicontinuity condition is expressed using natural concepts associated with quantum metrics and in particular, Lipschitz maps.

In the first section of this survey, we introduce the category of {\qcms s} and their Lipschitz morphisms. When discussing monoid actions, we certainly want to use the right notion of morphism, and it turns out that a natural picture emerges from the notion of {\qcms s}. We introduce the very natural ingredient which will then be used to express our equicontinuity conditions on actions. The second section describes the dual propinquity, as the basic object of our research. We then turn to the covariant propinquity in the last section. We construct this metric and then show a compactness-type result which formalizes the notion of preservation of symmetries by the dual propinquity. We conclude with a discussion of the completeness of the covariant propinquity and an example of covariant metric convergence.

\section{The category of {\qcms s}}

A quantum metric space is a noncommutative analogue of the algebra of Lipschitz functions over a metric space. The definition of such an analogue for compact metric spaces has evolved from an observation of Connes \cite{Connes89} to the corner stone of our approach to noncommutative metric geometry in a few key steps. Most importantly, Rieffel observed \cite{Rieffel98a,Rieffel99} that the metric on state spaces in \cite{Connes89} was in essence a special case of a noncommutative generalization of the construction of Kantorovich of a distance on Radon probabilities measures by duality from the Lipschitz seminorm \cite{Kantorovich40,Kantorovich58}.  The core features of the Kantorovich metric include the fact that it metrizes the weak* topology, and this is taken as a starting point for identifying those seminorms on C*-algebras which are candidates for noncommutative Lipschitz seminorms.

Another feature of Lipschitz seminorms is that they satisfy the Leibniz inequality. The importance of this property was not fully evident at the beginning of the study of {\qcms s}, though the difficulties it introduces in the study of noncommutative analogues of the Gromov-Hausdorff distance were \cite{Rieffel10c}. In fact, Rieffel's distance \cite{Rieffel00} does not require that noncommutative Lipschitz seminorms, then named Lip-norms, possess any such Leibniz properties. We only realized much later that some form of connection between the Lip-norms and the underlying multiplicative structure is in fact a means to define a noncommutative Gromov-Hausdorff distance which is zero only when the underlying C*-algebras are *-isomorphic. More importantly, as seen in our work, such a connection enables us to push forward our program by making it possible to study group actions on {\qcms s} and appropriately defined modules over {\qcms s}.

To express the connection between multiplication and noncommutative Lipschitz seminorms, we introduce, inspired by \cite{Kerr02}:

\begin{definition}
  A function $F : [0,\infty)^4 \rightarrow [0,\infty)$ is \emph{permissible} when $F$ is weakly increasing for the product order on $[0,\infty)^4$ and for all $x,y,l_x,l_y\geq 0$, we have $F(x,y,l_x,l_y) \geq x l_y + y l_x$.
\end{definition}

Our definition for {\qcms s} is thus as follows.

\begin{notation}
  Throughout this paper, for any unital C*-algebra $\A$, the norm of $\A$ is denoted by $\norm{\cdot}{\A}$, the space of self-adjoint elements in $\A$ is denoted by $\sa{\A}$, the unit of $\A$ is denoted by $\unit_\A$ and the state space of $\A$ is denoted by $\StateSpace(\A)$. We also adopt the convention that if a seminorm $\Lip$ is defined on some dense subspace of $\sa{\A}$ and $a\in\sa{\A}$ is not in the domain of $\Lip$, then $\Lip(a) = \infty$.
\end{notation}

\begin{definition}[{\cite{Connes89,Rieffel98a,Rieffel99,Rieffel10c,Latremoliere13,Latremoliere15}}]\label{qcms-def}
  An \emph{$F$-\qcms} $(\A,\Lip)$, for a permissible function $F$, is an ordered pair consisting of a unital C*-algebra $\A$ and a seminorm $\Lip$, called an \emph{L-seminorm}, defined on a dense Jordan-Lie subalgebra $\dom{\Lip}$ of $\sa{\A}$, such that:
  \begin{enumerate}
    \item $\{ a \in \sa{\A} : \Lip(a) = 0 \} = \R\unit_\A$,
    \item the \emph{\MongeKant} $\Kantorovich{\Lip}$ defined for any two states $\varphi, \psi \in \StateSpace(\A)$ by:
      \begin{equation*}
        \Kantorovich{\Lip}(\varphi, \psi) = \sup\left\{ |\varphi(a) - \psi(a)| : a\in \dom{\Lip}, \Lip(a) \leq 1 \right\}
      \end{equation*}
      metrizes the weak* topology restricted to $\StateSpace(\A)$,
    \item $\Lip$ satisfies the $F$-quasi-Leibniz inequality, i.e. for all $a,b \in \dom{\Lip}$:
      \begin{equation*}
        \max\left\{ \Lip\left(\frac{a b + b a}{2}\right), \Lip\left(\frac{a b - b a}{2i}\right) \right\} \leq F(\norm{a}{\A},\norm{b}{\A},\Lip(a),\Lip(b))\text{,}
      \end{equation*}
    \item $\Lip$ is lower semi-continuous with respect to $\norm{\cdot}{\A}$.
  \end{enumerate}
We say that $(\A,\Lip)$ is \emph{Leibniz} when $F$ can be chosen to be $F:x,y,l_x,l_y \mapsto x l_y + y l_x$.
\end{definition}

Definition (\ref{qcms-def}) is modeled after the following classical picture.

\begin{example}
  Let $(X,d)$ be a compact metric space. For any $f \in C(X)$, where $C(X)$ is the C*-algebra of continuous $\C$-valued functions over $X$, we set
  \begin{equation*}
    \Lip(f) = \sup\left\{\frac{|f(x)-f(y)|}{d(x,y)} : x,y \in X, x\not= y \right\} \text{,}
  \end{equation*}
  allowing for the value $\infty$. Of course, $\Lip$ is the usual Lipschitz seminorm on $C(X)$. Now, by \cite{Kantorovich40,Kantorovich58}, the {\MongeKant} $\Kantorovich{\Lip}$ does metrize the weak* topology on the state space $\StateSpace(C(X))$, which by Radon-Riesz theorem is the space of all Radon probability measures. Moreover, $\Lip(f g) \leq \norm{f}{C(X)}\Lip(g) + \Lip(f)\norm{g}{C(X)}$ for all $f,g \in C(X)$, so $(C(X),\Lip)$ is a Leibniz {\qcms}.
\end{example}

A truly noncommutative example of a Leibniz {\qcms} was obtained by Rieffel in \cite{Rieffel98a}.

\begin{example}[{\cite{Rieffel98a}}]\label{group-lip-ex}
  Let $G$ be a compact group with identity element $e$ and $\ell$ be a continuous length function over $G$. Let $\A$ be a unital C*-algebra such that there exists a strongly continuous action $\alpha$ of $G$ on $\A$. For all $a\in \A$, we define:
  \begin{equation*}
    \Lip(a) = \sup\left\{ \frac{\norm{\alpha^g(a) - a}{\A}}{\ell(g)} : g \in G\setminus\{e\}  \right\} \text{,}
  \end{equation*}
allowing for the value $\infty$. The condition that $\alpha$ is ergodic, i.e. $\{a\in\A:\forall g \in G\quad \alpha^g(a) = a\}=\C\unit_\A$, which is clearly necessary for $(\A,\Lip)$ to be a {\qcms}, is proven by Rieffel in \cite{Rieffel98a} to be sufficient as well.

This family of examples include:
\begin{itemize}
  \item \emph{quantum tori}, where $\A = C^\ast(\Z^d,\sigma)$ for some multiplier $\sigma$ of $\Z^d$ (with $d\geq 2$) and with $G = \T^d$ the $d$-torus, using the dual action $\alpha$,
  \item \emph{fuzzy tori}, where $G$ is a finite subgroup of $\T^d$, and $\A = C^\ast(\widehat{G},\sigma)$ for some multiplier $\sigma$ of $G$, again using the dual action as $\alpha$,
  \item \emph{noncommutative solenoids} \cite{Latremoliere16}, where $G = \solenoid{p}^2$ with $p \in \N\setminus\{0,1\}$:
    \begin{equation*}
      \solenoid{p} = \left\{ (z_n)_{n\in\N} \in \T^\N : \forall n \in \N \quad z_{n+1}^p = z_n \right\}
    \end{equation*}
    is the solenoid group, and where $\A = C^\ast\left(\Z\left[\frac{1}{p}\right]\times \Z\left[\frac{1}{p}\right], \sigma\right)$ for some multiplier $\sigma$ of $\Z\left[\frac{1}{p}\right]\times \Z\left[\frac{1}{p}\right]$, and $\alpha$ is again the dual action. In \cite{Latremoliere11c}, we computed the multipliers of $\Z\left[\frac{1}{p}\right]\times \Z\left[\frac{1}{p}\right]$ which is naturally homeomorphic to the solenoid group $\solenoid{p}$, and we classified all noncommutative solenoids up to *-isomorphism in terms of their multipliers, computing their K-theory.
\end{itemize}
In all our examples, any continuous length function would provide a {\qcms}. We will see later on that noncommutative solenoids and quantum tori can be seen as elements of the closure of the class of fuzzy tori, for our Gromov-Hausdorff propinquity, if we choose appropriately compatible continuous length functions.
\end{example}

A completely different set of examples is given by AF algebras.

\begin{example}[{\cite{Latremoliere15d}}]\label{AF-ex}
  Let $\A = \mathrm{cl}\left(\bigcup_{n\in\N}\A_n\right)$ be a C*-algebra which is the closure of an increasing union of a sequence of finite dimensional C*-subalgebras $(\A_n)_{n\in\N}$, i.e. an AF algebra. We assume that $\A$ carries a faithful tracial state $\mu$. For all $n\in\N$, there exists a unique conditional expectation $\mathds{E}_n : \A \twoheadrightarrow\A_n$ such that $\mu\circ\mathds{E}_n = \mu$. We then define, for all $a\in\A$:
  \begin{equation*}
    \Lip(a) = \sup\left\{ \frac{\norm{a-\mathds{E}_n(a)}{\A}}{\beta(n)} : n \in \N \right\}
  \end{equation*}
  allowing the value $\infty$, and with $(\beta(n))_{n\in\N} = (\frac{1}{\dim\A_n})_{n\in\N}$, or any choice of a sequence of positive numbers converging to $0$.

  We note that for all $a,b \in \A$, we have $\Lip(a b) \leq 2 ( \norm{a}{\A} \Lip(b) + \Lip(a) \norm{b}{\A} )$. So $(\A,\Lip)$ is a {\qcms}.
\end{example}

There are many other important examples of {\qcms s}, such as hyperbolic group C*-algebras \cite{Ozawa05}, nilpotent group C*-algebras \cite{Rieffel15b}, other quantum metrics on quantum tori \cite{Rieffel02, Latremoliere15c}, Podl{\`e}s spheres \cite{Kaad18}, various deformations of quantum metrics \cite{Latremoliere15b}, and more.

The notion of a {\lcqms} is more delicate to define, since the behavior of the {\MongeKant} is more subtle in this case \cite{Dobrushin70}. We introduced such a notion in \cite{Latremoliere05b,Latremoliere12b}, where we also study the noncommutative bounded-Lipschitz distance.

The class of {\qcms s} forms the class of objects of a category, which is helpful, for instance, in the appropriate definition for actions of groups and semigroups on {\qcms s}. There are several natural definitions of what a Lipschitz morphism ought to be. Maybe the one which is  at first sight the least demanding is the following.

\begin{definition}[{\cite{Latremoliere16b}}]
  Let $(\A,\Lip_\A)$ and $(\B,\Lip_\B)$ be two {\qcms s}. A \emph{Lipschitz morphism} $\pi : (\A,\Lip_\A) \rightarrow (\B,\Lip_\B)$ is a unital *-morphism $\pi : \A \rightarrow \B$ such that $\pi(\dom{\Lip_\A}) \subseteq \dom{\Lip_\B}$. 
\end{definition}

We now observe that in fact, there are important consequences to being a Lipschitz morphism, based on the following theorem:

\begin{theorem}[{\cite{Latremoliere16b}}]\label{equiv-thm}
Let $(\A,\Lip)$ be a quantum compact metric space, with $\Lip$ lower semi-continuous with domain $\dom{\Lip}$. Let $\mathsf{S}$ be a seminorm on $\dom{\Lip}$ such that:
\begin{enumerate}
\item $\mathsf{S}$ is lower semi-continuous with respect to $\|\cdot\|_\A$,
\item $\mathsf{S}(\unit_\A) = 0$.
\end{enumerate}
Then there exists $C > 0$ such that for all $a\in\dom{\Lip}$:
\begin{equation*}
\mathsf{S}(a) \leq C \Lip(a)\text{.}
\end{equation*}
\end{theorem}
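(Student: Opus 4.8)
The plan is to upgrade the lower semicontinuity of $\mathsf{S}$ to an honest norm bound by a Baire category argument, and then to strip away the dependence on $\norm{\cdot}{\A}$ using the finite diameter of the state space that is built into Definition~\ref{qcms-def}.

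First I would equip $\dom{\Lip}$ with the graph seminorm $a \mapsto \norm{a}{\A} + \Lip(a)$. Because $\Lip$ is lower semicontinuous with respect to $\norm{\cdot}{\A}$, this makes $\dom{\Lip}$ into a Banach space: a sequence that is Cauchy for the graph norm converges in $\norm{\cdot}{\A}$ to some $a$, and lower semicontinuity forces $a\in\dom{\Lip}$ with $\Lip(a)$ controlled by the limiting values, so the convergence in fact takes place in graph norm. Since $\mathsf{S}$ is lower semicontinuous for $\norm{\cdot}{\A}$, and the graph norm dominates $\norm{\cdot}{\A}$, the sublevel sets $F_n = \{a\in\dom{\Lip} : \mathsf{S}(a)\leq n\}$ are closed in this Banach space, and they cover $\dom{\Lip}$ because $\mathsf{S}$ is finite on its domain. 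By the Baire category theorem some $F_N$ has nonempty interior, say it contains a graph-norm ball $B(a_0,r)$; the usual symmetrization for seminorms (for $h$ of sufficiently small graph norm both $a_0\pm h$ lie in $F_N$, whence $\mathsf{S}(h)\leq\tfrac12(\mathsf{S}(a_0+h)+\mathsf{S}(a_0-h))\leq N$), together with homogeneity, yields a constant $C'>0$ with
\[
\mathsf{S}(a) \leq C'\left(\norm{a}{\A} + \Lip(a)\right) \qquad \text{for all } a\in\dom{\Lip}.
\]

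Next I would record the finite-radius property. Since $\Kantorovich{\Lip}$ metrizes the weak* topology on the weak*-compact set $\StateSpace(\A)$, its diameter is finite; writing $M$ for half this diameter and using that for self-adjoint $a$ one has $\dist(a,\R\unit_\A) = \tfrac12(\max\spectrum{a} - \min\spectrum{a}) = \tfrac12\sup_{\varphi,\psi\in\StateSpace(\A)}|\varphi(a)-\psi(a)|$, the definition of $\Kantorovich{\Lip}$ gives $\dist(a,\R\unit_\A)\leq M\,\Lip(a)$ for every $a\in\dom{\Lip}$. I would then combine the two estimates: given $a\in\dom{\Lip}$, choose $t\in\R$ attaining $\dist(a,\R\unit_\A) = \norm{a-t\unit_\A}{\A}$ (the minimum is realized at the midpoint of the spectrum). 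Since both $\mathsf{S}$ and $\Lip$ annihilate $\R\unit_\A$,
\[
\mathsf{S}(a) = \mathsf{S}(a-t\unit_\A) \leq C'\left(\norm{a-t\unit_\A}{\A} + \Lip(a)\right) \leq C'(M+1)\,\Lip(a),
\]
so the theorem holds with $C = C'(M+1)$.

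The main obstacle is the passage encapsulated in the first estimate: lower semicontinuity and finiteness of $\mathsf{S}$ do \emph{not} by themselves bound $\mathsf{S}$ on a norm-compact set, since a finite lower semicontinuous function can be unbounded above on a compact set, so one cannot simply feed the precompactness of the Lipschitz ball into a continuity argument. It is precisely the completeness of the graph norm, which comes from the lower semicontinuity of $\Lip$, that makes the Baire category theorem available and converts the closedness of the sublevel sets into a genuine bound. The remaining steps are soft: the finite-radius estimate is a direct unpacking of the {\MongeKant} together with weak*-compactness of $\StateSpace(\A)$, and the final combination only exploits that both seminorms kill the scalars.
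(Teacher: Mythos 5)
Your proof is correct, and it is essentially the argument behind the cited result: the survey itself states Theorem~\ref{equiv-thm} without proof (referring to \cite{Latremoliere16b}), and the proof there proceeds exactly as you do---completeness of $\dom{\Lip}$ under the graph norm via lower semicontinuity of $\Lip$, a Baire category argument on the closed sublevel sets of $\mathsf{S}$ to get $\mathsf{S} \leq C'(\norm{\cdot}{\A} + \Lip)$, and then elimination of the norm term by translating by a scalar multiple of $\unit_\A$ using the finite radius of $(\StateSpace(\A),\Kantorovich{\Lip})$, which both seminorms annihilate. The only cosmetic difference is that the original argument centers $a$ at $\varphi(a)\unit_\A$ for a fixed state $\varphi$ rather than at the spectral midpoint, which changes nothing of substance.
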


We thus conclude, using Theorem (\ref{equiv-thm}) and \cite{Rieffel00}, that at least three natural notions of Lipschitz morphisms are indeed equivalent.
\begin{theorem}[{\cite{Rieffel99,Rieffel00,Latremoliere16b}}]
    Let $(\A,\Lip_\A)$ and $(\B,\Lip_\B)$ be two {\qcms s} and let $\pi : \A \rightarrow \B$ be a unital *-morphism. The following assertions are equivalent:
    \begin{enumerate}
      \item $\pi : (\A,\Lip_\A) \rightarrow (\B,\Lip_\B)$ is a Lipschitz morphism,
      \item there exists $k \geq 0$ such that $\Lip_\B\circ\pi \leq k \Lip_\A$,
      \item there exists $k \geq 0$ such that $\varphi \in \StateSpace(\B) \mapsto \varphi\circ\pi$ is a $k$-Lipschitz map from $(\StateSpace(\B),\Kantorovich{\Lip_\B})$ to $(\StateSpace(\A),\Kantorovich{\Lip_\A})$.
    \end{enumerate}
    Moreover, the real number $k$ in Assertion (2) and Assertion (3) can be chosen to be the same.

    Furthermore, the composition of two Lipschitz morphisms is a Lipschitz morphism.
\end{theorem}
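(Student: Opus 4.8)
The plan is to prove the cycle $(1)\Leftrightarrow(2)\Leftrightarrow(3)$, reading off the constant $k$ at each step so that the ``moreover'' clause comes for free, and then to dispatch the composition statement separately.

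The implication $(2)\Rightarrow(1)$ is immediate from the conventions of Definition (\ref{qcms-def}): if $\Lip_\B\circ\pi\leq k\Lip_\A$ then every $a\in\dom{\Lip_\A}$ satisfies $\Lip_\B(\pi(a))\leq k\Lip_\A(a)<\infty$, so $\pi(a)\in\dom{\Lip_\B}$ and $\pi$ is a Lipschitz morphism. For $(1)\Rightarrow(2)$ I would apply Theorem (\ref{equiv-thm}) to the seminorm $\mathsf{S}:=\Lip_\B\circ\pi$ on $\dom{\Lip_\A}$. First, $\mathsf{S}$ is a well-defined seminorm taking finite values on $\dom{\Lip_\A}$: since $\pi$ is $\R$-linear and sends $\sa{\A}$ into $\sa{\B}$, and since the Lipschitz morphism hypothesis gives $\pi(\dom{\Lip_\A})\subseteq\dom{\Lip_\B}$, subadditivity and absolute homogeneity of $\mathsf{S}$ follow from those of $\Lip_\B$. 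The two hypotheses of Theorem (\ref{equiv-thm}) are then checked directly: $\mathsf{S}(\unit_\A)=\Lip_\B(\pi(\unit_\A))=\Lip_\B(\unit_\B)=0$ because $\pi$ is unital and $\Lip_\B$ vanishes on $\R\unit_\B$; and $\mathsf{S}$ is lower semicontinuous for $\norm{\cdot}{\A}$ because a *-morphism of C*-algebras is norm contractive, so $a_n\to a$ forces $\pi(a_n)\to\pi(a)$ in $\B$, and the lower semicontinuity of $\Lip_\B$ (axiom (4)) gives $\mathsf{S}(a)\leq\liminf_n\mathsf{S}(a_n)$. Theorem (\ref{equiv-thm}) now yields $C>0$ with $\Lip_\B\circ\pi\leq C\Lip_\A$, which is $(2)$.

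For the equivalence with $(3)$, I first note that $\pi^\ast:\varphi\mapsto\varphi\circ\pi$ carries $\StateSpace(\B)$ into $\StateSpace(\A)$, since $\pi$ unital and positive makes $\varphi\circ\pi$ a state. Assuming $(2)$, any $a\in\dom{\Lip_\A}$ with $\Lip_\A(a)\leq 1$ has $\Lip_\B(\pi(a))\leq k$, so $\pi(a)$ is an admissible test element (after scaling by $k$) in the definition of $\Kantorovich{\Lip_\B}$; substituting and taking the supremum over such $a$ gives $\Kantorovich{\Lip_\A}(\pi^\ast\varphi,\pi^\ast\psi)\leq k\,\Kantorovich{\Lip_\B}(\varphi,\psi)$, which is $(3)$ with the same $k$. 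The converse $(3)\Rightarrow(2)$ is the step carrying the real content, and here I would invoke Rieffel's duality theorem from \cite{Rieffel99}: because $\Lip_\B$ is lower semicontinuous, it is recovered from its \MongeKant{} by
\begin{equation*}
  \Lip_\B(b)=\sup\left\{\frac{|\varphi(b)-\psi(b)|}{\Kantorovich{\Lip_\B}(\varphi,\psi)}:\varphi,\psi\in\StateSpace(\B),\ \varphi\neq\psi\right\}
\end{equation*}
for every $b\in\sa{\B}$. Taking $b=\pi(a)$ with $a\in\dom{\Lip_\A}$, bounding the numerator by $|\varphi(\pi(a))-\psi(\pi(a))|=|(\pi^\ast\varphi)(a)-(\pi^\ast\psi)(a)|\leq\Lip_\A(a)\,\Kantorovich{\Lip_\A}(\pi^\ast\varphi,\pi^\ast\psi)$ and then using the $k$-Lipschitz hypothesis $\Kantorovich{\Lip_\A}(\pi^\ast\varphi,\pi^\ast\psi)\leq k\,\Kantorovich{\Lip_\B}(\varphi,\psi)$, every quotient is at most $k\Lip_\A(a)$, so the supremum gives $\Lip_\B(\pi(a))\leq k\Lip_\A(a)$. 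This proves $(2)$ with the same $k$ and establishes the ``moreover'' clause.

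Finally, the composition statement is routine: if $\pi:(\A,\Lip_\A)\to(\B,\Lip_\B)$ and $\rho:(\B,\Lip_\B)\to(\D,\Lip_\D)$ are Lipschitz morphisms, then $\rho\circ\pi$ is a unital *-morphism with $\rho(\pi(\dom{\Lip_\A}))\subseteq\rho(\dom{\Lip_\B})\subseteq\dom{\Lip_\D}$, so it is a Lipschitz morphism; equivalently one composes the inequalities from $(2)$. The main obstacle is the recovery formula used in $(3)\Rightarrow(2)$: it is precisely here that lower semicontinuity of the L-seminorm is indispensable, as without axiom (4) of Definition (\ref{qcms-def}) the duality recovers only the lower semicontinuous regularization of $\Lip_\B$ rather than $\Lip_\B$ itself.
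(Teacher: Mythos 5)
Your proposal is correct and follows essentially the same route the paper indicates: the paper proves this theorem by combining Theorem (\ref{equiv-thm}) (your step $(1)\Rightarrow(2)$, with the same verification that $\Lip_\B\circ\pi$ is a lower semicontinuous seminorm vanishing at $\unit_\A$) with Rieffel's state-space duality for lower semicontinuous L-seminorms (your steps $(2)\Leftrightarrow(3)$), the remaining implications being immediate. Your write-up simply fills in the details the paper leaves to the citations \cite{Rieffel99,Rieffel00,Latremoliere16b}, including the correct observation that closedness of $\Lip_\B$ is exactly what makes the recovery formula valid for $\pi(a)$ before one knows $\pi(a)\in\dom{\Lip_\B}$.
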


We will find it useful also allow for the notion of a Lipschitz linear map, defined as follows:
\begin{definition}[{\cite{Latremoliere18b}}]\label{Lipschitz-linear-def}
  Let $(\A,\Lip_\A)$ and $(\B,\Lip_\B)$ be two {\qcms s}. A \emph{Lipschitz linear} map $\mu : (\A,\Lip_\A)\rightarrow(\B,\Lip_\B)$ is a positive unit-preserving linear map $\mu : \A\rightarrow\B$ for which there exists $k\geq 0$ such that $\Lip_\B\circ\mu\leq k \Lip_\A$.
\end{definition}
By Theorem (\ref{equiv-thm}), and with the notations of Definition (\ref{Lipschitz-linear-def}), $\mu : \A \rightarrow \B$ is Lipschitz linear if it is linear, positive, unital, and $\mu(\dom{\Lip_\A})\subseteq\dom{\Lip_\B}$.

In general, there is a natural notion of \emph{dilation} (or Lipschitz constant) for Lipschitz morphisms and more generally, Lipschitz linear maps. This notion will be useful in formulating equicontinuity condition later on when working with actions of groups and semigroups.

\begin{notation}[{\cite{Latremoliere16b}}]
  Let $(\A,\Lip_\A)$ and $(\B,\Lip_\B)$ be two {\qcms s}. If $\pi : \A \rightarrow \B$ is a unital positive linear map, then
  \begin{equation*}
    \dil{\pi} = \inf\left\{ k > 0 : \forall a \in \sa{\A} \quad \Lip\circ\pi(a) \leq k \Lip(a) \right\} \text{.}
  \end{equation*}
  By definition, $\dil{\pi} < \infty$ if and only if $\pi$ is a Lipschitz linear map.
\end{notation}

In \cite{Latremoliere16b}, we use the notion of dilation for Lipschitz morphisms to define the noncommutative version of the Lipschitz distance. The Lipschitz distance dominates the dual propinquity we will review in the next section, and in fact, closed balls for the Lipschitz distance are compact for the dual propinquity. Of course, as in the classical picture, the Lipschitz distance is only interesting between *-isomorphic C*-algebras endowed with various metrics, so it is much too strong for most of our purpose.

Another useful tool when working with actions via Lipschitz morphisms will be a metric on the space of *-morphisms, or more generally unital positive maps, induced by L-seminorms. The following result generalizes slightly the last statement of \cite{Latremoliere16b}.

\begin{theorem}[{\cite{Latremoliere16b}}]
  Let $(\A,\Lip_\A)$ be a {\qcms} and let $\B$ be a unital C*-algebra. If for any two unital linear maps $\alpha$, $\beta$ from $\A$ to $\B$, we set
  \begin{equation*}
    \KantorovichDist{\Lip_\A}{\alpha}{\beta} = \sup\left\{ \norm{\alpha(a) - \beta(a)}{\B} : a\in\dom{\Lip}, \Lip(a) \leq 1 \right\} \text{,}
  \end{equation*}
  then $\mathrm{mkD}_{\Lip_\A}$ is a distance on the space $\mathcal{B}_1(\A,\B)$ of unit preserving bounded linear maps, which, on any norm-bounded subset, metrizes the initial topology induced by the family of seminorms
  \begin{equation*}
    \left\{ \alpha \in \mathcal{B}_1 \mapsto \norm{\alpha(a)}{\B} : a \in \A  \right\} \text{.}
  \end{equation*}
\end{theorem}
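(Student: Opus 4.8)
The plan is to verify the metric axioms by hand and then to establish, on a fixed norm-bounded set, the two inclusions between the $\mathrm{mkD}_{\Lip_\A}$-topology and the topology of pointwise-in-norm convergence; the only substantive point is an Arz\'ela--Ascoli-type passage from pointwise to uniform convergence, resting on the total boundedness of the $\Lip_\A$-unit ball modulo scalars.

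First I would check that $\KantorovichDist{\Lip_\A}{\cdot}{\cdot}$ is a distance. Symmetry is immediate, and the triangle inequality follows from that of $\norm{\cdot}{\B}$ inside the supremum: for $a\in\dom{\Lip_\A}$ with $\Lip_\A(a)\leq 1$ we have $\norm{\alpha(a)-\gamma(a)}{\B}\leq\norm{\alpha(a)-\beta(a)}{\B}+\norm{\beta(a)-\gamma(a)}{\B}$, and taking suprema gives $\KantorovichDist{\Lip_\A}{\alpha}{\gamma}\leq\KantorovichDist{\Lip_\A}{\alpha}{\beta}+\KantorovichDist{\Lip_\A}{\beta}{\gamma}$. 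Finiteness is where unitality first enters: fixing a state $\mu_0\in\StateSpace(\A)$ and writing, for $a\in\dom{\Lip_\A}$ with $\Lip_\A(a)\leq 1$, the decomposition $a=(a-\mu_0(a)\unit_\A)+\mu_0(a)\unit_\A$, the scalar term is killed by $\alpha-\beta$ since both maps are unital; as $a$ is self-adjoint, $\norm{a-\mu_0(a)\unit_\A}{\A}=\sup_{\nu\in\StateSpace(\A)}|\nu(a)-\mu_0(a)|\leq\diam{\StateSpace(\A)}{\Kantorovich{\Lip_\A}}=:M$, which is finite because $\Kantorovich{\Lip_\A}$ metrizes the compact weak* topology on $\StateSpace(\A)$. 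Hence $\KantorovichDist{\Lip_\A}{\alpha}{\beta}\leq(\|\alpha\|+\|\beta\|)M<\infty$. For separation, if the distance vanishes then $\alpha$ and $\beta$ agree on $\{a\in\dom{\Lip_\A}:\Lip_\A(a)\leq 1\}$; rescaling handles each $a\in\dom{\Lip_\A}$ with $\Lip_\A(a)>0$, unitality and Condition (1) of Definition \ref{qcms-def} handle $\R\unit_\A$, and density of $\dom{\Lip_\A}$ in $\sa{\A}$ together with norm-continuity of $\alpha,\beta$ propagate the equality to all of $\A$.

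For the topological statement, fix a norm-bounded $S\subseteq\mathcal{B}_1(\A,\B)$ with $C:=\sup_{\gamma\in S}\|\gamma\|<\infty$. Using unitality as above I would rewrite the distance as a supremum over the sliced ball $\mathcal{L}_1:=\{a\in\dom{\Lip_\A}:\Lip_\A(a)\leq 1,\ \mu_0(a)=0\}$, namely $\KantorovichDist{\Lip_\A}{\alpha}{\beta}=\sup\{\norm{\alpha(a)-\beta(a)}{\B}:a\in\mathcal{L}_1\}$. The crucial input is that $\mathcal{L}_1$ is totally bounded in $\norm{\cdot}{\A}$: this is exactly Rieffel's characterization underlying Condition (2) of Definition \ref{qcms-def}, namely that $\Kantorovich{\Lip_\A}$ metrizes the weak* topology iff the $\Lip_\A$-unit ball is totally bounded modulo scalars. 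One inclusion is then easy: for $a\in\dom{\Lip_\A}$ with $\Lip_\A(a)>0$ one has $\norm{\alpha(a)-\beta(a)}{\B}\leq\Lip_\A(a)\,\KantorovichDist{\Lip_\A}{\alpha}{\beta}$, and approximating an arbitrary $a\in\A$ in norm by elements of $\dom{\Lip_\A}$, with the error controlled by $2C$ thanks to the bound on $S$, shows that each seminorm $\gamma\mapsto\norm{\gamma(a)}{\B}$ is $\mathrm{mkD}_{\Lip_\A}$-continuous on $S$.

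The hard part---and the reason norm-boundedness of $S$ is indispensable---is the reverse inclusion, upgrading pointwise-in-norm convergence to uniformity over $\mathcal{L}_1$. Given $\epsilon>0$, I would extract a finite $\epsilon$-net $b_1,\dots,b_n\in\mathcal{L}_1$. For $a\in\mathcal{L}_1$, choosing $b_k$ with $\norm{a-b_k}{\A}<\epsilon$ and inserting $\pm\alpha(b_k),\pm\beta(b_k)$ gives
\begin{equation*}
\norm{\alpha(a)-\beta(a)}{\B}\leq 2C\epsilon+\max_{1\leq k\leq n}\norm{\alpha(b_k)-\beta(b_k)}{\B}\text{,}
\end{equation*}
whence $\KantorovichDist{\Lip_\A}{\alpha}{\beta}\leq 2C\epsilon+\max_{1\leq k\leq n}\norm{\alpha(b_k)-\beta(b_k)}{\B}$. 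For the fixed net $b_1,\dots,b_n$ the right-hand side is continuous in the topology of pointwise-in-norm convergence, being a finite maximum of the defining seminorms evaluated at $\alpha-\beta$; letting $\alpha$ run along a net converging to $\beta$ in that topology and then $\epsilon\to 0$ forces $\KantorovichDist{\Lip_\A}{\alpha}{\beta}\to 0$, so that $\mathrm{mkD}_{\Lip_\A}$-balls are relatively open on $S$. I expect the only genuine obstacle to be the correct invocation of total boundedness of $\mathcal{L}_1$ and the bookkeeping of the factor $2C$: these are precisely the two places where the hypotheses ``{\qcms}'' and ``norm-bounded subset'' are used.
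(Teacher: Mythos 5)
Your proposal is correct and follows essentially the same route as the paper's proof: both directions hinge on exactly the same ingredients, namely unitality to reduce the supremum to the state-sliced set $\{a\in\dom{\Lip_\A}:\Lip_\A(a)\leq 1,\ \mu(a)=0\}$, total boundedness of that set (Rieffel's criterion behind Condition (2) of Definition \ref{qcms-def}) together with a finite net and the uniform norm bound for the hard direction, and density of $\dom{\Lip_\A}$ in $\sa{\A}$ with the same bound for the easy direction. The only difference is cosmetic: the paper works with sequences and defers the metric axioms (finiteness, separation) to the cited reference, whereas you verify them explicitly, which is a harmless addition.
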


\begin{proof}
  Let $(\alpha_n)_{n\in\N}$ be a sequence of unit preserving linear maps converging to some unital linear map $\alpha_\infty$ for $\KantorovichDist{\Lip_\A}{}{}$, and for which there exists some $B > 0$ such that for all $n\in\N\cup\{\infty\}$, we have $\opnorm{\alpha_n}{\A}{\B} \leq B$, where $\opnorm{\cdot}{\A}{\B}$ is the operator norm for linear maps from $\A$ to $\B$. Let $a\in \sa{\A}$ and $\varepsilon > 0$. Since $\dom{\Lip_\A}$ is dense in $\sa{\A}$, there exists $a' \in \dom{\Lip_\A}$ such that $\norm{a-a'}{\A} < \frac{\varepsilon}{3 B}$. By definition, there exists $N\in\N$ such that for all $n\geq N$, we have $\KantorovichDist{\Lip_\A}{\alpha_n}{\alpha_\infty} < \frac{\varepsilon}{3(\Lip_\A(a')+1)}$. Thus
  \begin{equation*}
    \norm{\alpha_\infty(a)-\alpha_n(a)}{\B} \leq \norm{\alpha_n(a - a')}{\B} + \norm{\alpha_n(a')-\alpha_\infty(a')}{\B} + \norm{\alpha_\infty(a-a')}{\B} < \varepsilon \text{.}
  \end{equation*}
  Thus for all $a\in\sa{\A}$, the sequence $(\alpha_n(a))_{n\in\N}$ converges to $\alpha_\infty(a)$. By linearity, we then conclude $(\alpha_n(a))_{n\in\N}$ converge to $\alpha_\infty(a)$ for $\norm{\cdot}{\B}$.

  Conversely, assume that for all $a\in\A$, the sequence $(\alpha_n(a))_{n\in\N}$ converges  to $\alpha_\infty(a)$ in $\B$, and again assume that there exists $B > 0$ such that for all $n\in\N\cup\{\infty\}$, we have $\opnorm{\alpha_n}{\A}{\B} \leq B$. Let $\varepsilon > 0$ and fix $\mu \in \StateSpace(\A)$. As $\Lip_\A$ is a L-seminorm, $L = \{a\in\sa{\A}:\Lip_\A(a)\leq 1,\mu(a) = 0\}$ is totally bounded. Thus, there exists a finite $\frac{\varepsilon}{3 B}$-dense set $F \subseteq L$ of $L$. As $F$ is finite, by assumption, there exists $N\in\N$ such that for all $n\geq N$ and all $a\in F$, we have $\norm{\alpha_n(a)-\alpha_\infty(a)}{\B} < \frac{\varepsilon}{3}$. If $n\geq N$ and $a\in \sa{\A}$ such that $\Lip_\A(a)\leq 1$ then there exists $a' \in F$ such that $\norm{a-\mu(a)\unit_\A - a'}{\A} < \frac{\varepsilon}{3 B}$, and thus
\begin{align*}
  \norm{\alpha_n(a)-\alpha_\infty(a)}{\B} 
  &\leq \norm{\alpha_n(a-\mu(a)\unit_\A) - \alpha_\infty(a-\mu(a)\unit_\A)}{\B} \\
  &\leq \norm{\alpha_n(a-\mu(a)\unit_\A - a')}{\B} \\
  &\quad + \norm{\alpha_n(a')-\alpha_\infty(a')}{\B} + \norm{\alpha_\infty(a-\mu(a)\unit_\A)-a'}{\B} \\
  &\leq B\frac{\varepsilon}{3B} + \frac{\varepsilon}{3} + B\frac{\varepsilon}{3B} < \varepsilon \text{.}
\end{align*}
Thus for $n\geq N$, we have $\KantorovichDist{\Lip_\A}{\alpha_n}{\alpha_\infty} < \varepsilon$.
\end{proof}

There are of course at least four common notions of morphisms over the category of metric spaces: continuous functions, uniformly continuous functions, Lipschitz functions, and isometries. Continuous functions correspond to *-morphism of course. As we work with compact metric space, uniform continuity and continuity are equivalent. We just define the notion of Lipschitz morphism. For our work, we also will need to understand what a quantum isometry should be.

Rieffel observed that McShane's theorem \cite{McShane34} on extension of \emph{real-valued} Lipschitz functions can be used to characterize isometries. The emphasis on real-valued Lipschitz functions, rather than complex valued, means for our purpose that the isometric property will only involve self-adjoint elements. The definition of a quantum isometry is thus given by:

\begin{definition}[{\cite{Rieffel99,Rieffel00}}]
  Let $(\A,\Lip_\A)$ and  $(\B,\Lip_\B)$ be {\qcms s}.
  \begin{itemize}
  \item A \emph{quantum isometry} $\pi : (\A,\Lip_\A) \rightarrow (\B,\Lip_\B)$ is a *-epimorphism from $\A$ onto $\B$ such that for all $b \in \dom{\Lip_\A}$
    \begin{equation*}
      \Lip_\B(b) = \inf\left\{ \Lip_\A(a) : \pi(a) = b \right\} \text{.}
    \end{equation*}
  \item A \emph{full quantum isometry} $\pi : (\A,\Lip_\A) \rightarrow (\B,\Lip_\B)$ is a *-isomorphism from $\A$ onto $\B$ such that $\Lip_\B\circ\pi = \Lip_\A$.
  \end{itemize}
\end{definition}

We observe that in \cite{Rieffel00}, Rieffel proved that quantum isometries can be chosen as morphisms for a category over {\qcms s}; this is a subcategory of the category of {\qcms s} with Lipschitz morphisms (as quantum isometries are obviously $1$-Lipschitz morphisms).

The notion of full quantum isometry is essential to our work: it is the notion which we take as the basic equivalence between {\qcms s}, i.e. two {\qcms s} are, for our purpose, the same when they are fully quantum isometric.

\section{The Gromov-Hausdorff Propinquity}

The Gromov-Hausdorff distance \cite{Gromov,Gromov81} is a complete metric up to full isometry on the class of proper metric spaces, initially described by Edwards for compact metric spaces \cite{Edwards75}. This metric is an intrinsic version of the Hausdorff distance \cite{Hausdorff}. It is constructed by taking the infimum of the Hausdorff distance between any two isometric copies of given compact metric spaces. By duality, as we have a notion of quantum isometry, we obtain a notion of something we shall call a tunnel between two {\qcms s}.

\begin{definition}[{\cite{Latremoliere13b}}]
  Let $(\A_1,\Lip_1)$ and $(\A_2,\Lip_2)$ be two {$F$-\qcms s} for a permissible function $F$. An \emph{$F$-tunnel} $\tau = (\D,\Lip,\pi_1,\pi_2)$ from $(\A_1,\Lip_1)$ to $(\A_2,\Lip_2)$ is a {\Qqcms{F}} $(\D,\Lip)$ and for each $j\in\{1,2\}$, a quantum isometry $\pi_j : (\D,\Lip)\twoheadrightarrow(\A_j,\Lip_j)$. The space $(\A_1,\Lip_1)$ is the \emph{domain} $\dom{\tau}$ of $\tau$,  while the space $(\A_2,\Lip_2)$ is the \emph{codomain} $\codom{\tau}$ of $\tau$.
\end{definition}

      \begin{figure}[h]
        \begin{equation*}
          \xymatrix{
            & (Z,\mathrm{d}_Z)  & \\
            (X,\mathrm{d}_X) \ar@{^{(}->}[ur]^{\iota_X} & & (Y,\mathrm{d}_Y)  \ar@{_{(}->}[ul]_{\iota_Y}
          }
        \end{equation*}
        \caption{Isometric Embeddings}\label{GH-fig}
      \end{figure}
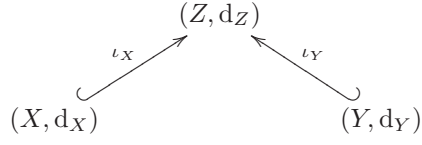
      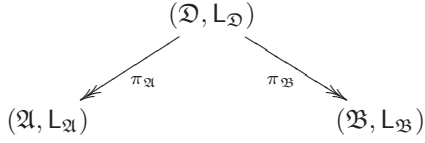
\begin{figure}[h]
        \begin{equation*}
          \xymatrix{
            & (\D,\Lip_\D)  \ar@{->>}[dl]^{\pi_\A} \ar@{->>}[dr]_{\pi_\B} & \\
            (\A,\Lip_\A)  & & (\B,\Lip_\B)
          }
        \end{equation*}
        \caption{A tunnel}\label{tunnel-fig}
      \end{figure}

\begin{figure}[h]
\begin{equation*}
\xymatrix{
 & (\StateSpace(\D),\Kantorovich{\Lip_\D})  & \\
(\StateSpace(\A),\Kantorovich{\Lip_\A}) \ar@{^{(}->}[ur]^{\pi_\A^\ast} & & (\StateSpace(\B),\Kantorovich{\Lip_\B})  \ar@{_{(}->}[ul]_{\pi_\B^\ast}
}
\end{equation*}
\caption{Isometric Embeddings of state spaces induced by tunnels}\label{GH-fig}
\end{figure}
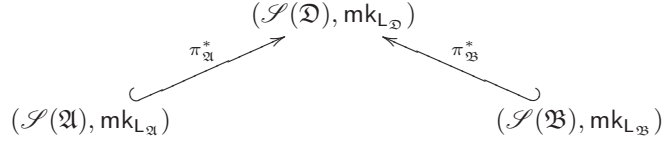

We need to associate a quantity to any given tunnel which estimates how far apart its domain and codomain are. The following quantity is the choice which gives us the best construction of our propinquity \cite{Latremoliere14}, though a certain alternative method can be found in \cite{Latremoliere13b} where the dual propinquity was originally developed.

\begin{definition}[{\cite{Latremoliere14}}]
  The \emph{extent} $\tunnelextent{\tau}$ of a tunnel $\tau = (\D,\Lip,\pi_1,\pi_2)$ is given as the real number
  \begin{equation*}
    \max_{j\in\{1,2\}} \Haus{\Kantorovich{\Lip}}\left(\StateSpace(\D), \left\{ \varphi\circ\pi_j : \varphi \in \StateSpace(\A_j) \right\} \right) \text{.}
  \end{equation*}
\end{definition}

Tunnels between two {\qcms s} involve a third one, and we have to choose what properties, if any, this third {\qcms} should possess. The most important restriction we must impose for our theory to work is a particular choice of permissible function, i.e. of a quasi-Leibniz inequality satisfied by all three L-seminorms. The key is that the choice must be uniform throughout our construction of the propinquity, so that the propinquity is indeed a metric up to full quantum isometry. The quasi-Leibniz inequality is used to obtain the multiplicative property.

There may be situations which require more properties for L-seminorms. An example is the strong Leibniz property introduced by Rieffel in \cite{Rieffel10c}. Thus, it is helpful to keep some level of generality in our construction by allowing flexibility in restricting the class of tunnels used, beyond the quasi-Leibniz restriction. In order for our construction to lead to a metric, we ask that a chosen class of tunnel meet the following definition.

\begin{definition}[{\cite{Latremoliere14}}]
  Let $F$ be a permissible function. A class $\mathcal{T}$ of $F$-tunnels is \emph{appropriate} for a nonempty class $\mathcal{C}$ of {$F$-{\qcms s}} when
  \begin{enumerate}
  \item for every $\tau\in\mathcal{T}$, we have $\dom{\tau},\codom{\tau} \in \mathcal{C}$,
  \item for every $\mathds{A},\mathds{B} \in \mathcal{C}$, there exists $\tau \in \mathcal{T}$ with domain $\mathds{A}$ and codomain $\mathds{B}$,
  \item if $\tau = (\D,\Lip,\pi,\rho) \in \mathcal{T}$ then $\tau^{-1} = (\D,\Lip,\rho,\pi) \in \mathcal{T}$,
  \item if $\varepsilon > 0$, and if $\tau_1,\tau_2 \in \mathcal{T}$ with $\codom{\tau_1} = \dom{\tau_2}$, then there exists $\tau\in\mathcal{T}$ with $\dom{\tau} = \dom{\tau_1}$, $\codom{\tau} = \codom{\tau_2}$, and $\tunnelextent{\tau} \leq \tunnelextent{\tau_1} + \tunnelextent{\tau_2} + \varepsilon$,
  \item if $(\A,\Lip_\A)$,$(\B,\Lip_\B)$ are in $\mathcal{C}$ and if there exists a full quantum isometry $\pi : (\A,\Lip_\A)\rightarrow (\B,\Lip_\B)$, then $(\A,\Lip_\A,\pi,\mathrm{id}) \in \mathcal{T}$, with $\mathrm{id}$ the identity *-automorphism of $\A$.
  \end{enumerate}
\end{definition}

We proved in \cite{Latremoliere14} that the class of all $F$-tunnels is appropriate for the class of all {$F$-\qcms s}, and this is, for our own work, the sort of class we work with (note that we must impose the restriction to work with tunnels which all share the same quasi-Leibniz inequality, as parameterized by $F$).

\begin{notation}
  Let $\mathcal{T}$ be a class of tunnels appropriate for a nonempty class of $F$-{\qcms s} $\mathcal{C}$. The set of all $F$-tunnels in $\mathcal{T}$ from $(\A,\Lip_\A)$ to $(\B,\Lip_\B)$, both chosen in $\mathcal{C}$, is denoted by
  \begin{equation*}
    \tunnelsetc{\A,\Lip_\A}{\B,\Lip_\B}{\mathcal{T}} \text{.}
  \end{equation*}
  When $\mathcal{T}$ is simply the class of all $F$-tunnels, we then write the class of all $F$-tunnels from $(\A,\Lip_\A)$ to $(\B,\Lip_\B)$, for any two $F$-{\qcms s} $\mathds{A}$, $\mathds{B}$, by
  \begin{equation*}
        \tunnelsetc{\A,\Lip_\A}{\B,\Lip_\B}{F} \text{.}
  \end{equation*}
\end{notation}

The dual Gromov-Hausdorff propinquity is now constructed using the same technique as Edwards' and Gromov's. It enjoys many good properties, among which is being a metric up to full quantum isometry.

\begin{theorem-definition}[{\cite{Latremoliere13,Latremoliere13b,Latremoliere14,Latremoliere15}}]
  Let $\mathcal{T}$ be a class of tunnels appropriate for a nonempty class of {$F$-\qcms s} for some permissible function $F$. We define the \emph{dual $\mathcal{T}$-propinquity} between any two {\qcms s} $(\A,\Lip)$ and $(\B,\Lip)$ in $\mathcal{C}$ as the real number
  \begin{equation*}
    \dpropinquity{\mathcal{T}}((\A,\Lip_\A),(\B,\Lip_\B)) = \inf\left\{ \tunnelextent{\tau} : \tau \in \tunnelsetc{\A,\Lip_\A}{\B,\Lip_\B}{\mathcal{T}} \right\}\text{.}
  \end{equation*}

  The dual propinquity is a metric up to quantum full isometry on $\mathcal{C}$, i.e. it is a pseudo-metric on $\mathcal{C}$ such that any $\mathds{A},\mathds{B} \in \mathcal{C}$ are fully quantum isometry if and only if $\dpropinquity{\mathcal{T}}(\mathds{A},\mathds{B}) = 0$.

  Moreover, if $(X,d_X)$ and $(Y,d_Y)$ are two compact metric spaces and if $\Lip_X$ and $\Lip_Y$ are the Lipschitz seminorms induced respectively on $C(X)$ by $d_X$ and $C(Y)$ by $d_Y$, and if $\mathrm{GH}$ is the usual Gromov-Hausdorff distance, then
  \begin{equation*}
    \mathrm{GH}((X,d_X),(Y,d_Y)) \leq \dpropinquity{\mathcal{T}}( (C(X),\Lip_X), (C(Y),\Lip_Y)) \text{,}
  \end{equation*}
as long as $(C(X),\Lip_X),(C(Y),\Lip_Y) \in \mathcal{C}$, and the topology induced on the class of classical metric spaces in $\mathcal{C}$ via $\dpropinquity{T}$ is the same as the topology induced by the Gromov-Hausdorff distance.

Last, let us denote by $\dpropinquity{F}$ the dual propinquity induced on all $F$-{\qcms s} using all possible $F$-tunnels. Then $\dpropinquity{F}$ is complete if $F$ is continuous.
\end{theorem-definition}

Thus in particular, $\dpropinquity{F}$ is a complete metric up to full quantum isometry which induces the same topology as the Gromov-Hausdorff distance on classical compact metric spaces. This is the main tool for our research.

We note that completeness is a desirable property for obvious reasons, including the study of compactness. We derive an analogue of Gromov's compactness theorem in \cite{Latremoliere15}.

Now, in order to actually prove convergence results for the dual propinquity, it is of course desirable to have a prolific source of tunnels for any two {\qcms s}. We actually first discovered this source in \cite{Latremoliere13} before we introduced tunnels \cite{Latremoliere13b}. The idea behind a bridge could be said to be a far-reaching generalization of the idea of an intertwiner between two *-representations of a C*-algebra, though we now work with representations of two different C*-algebras, restricting ourselves to faithful unital representations on the same space, and we will learn to measure how good of an ``approximate intertwiner'' a particular bridge is. This informal idea begins with the following definition.

\begin{definition}[{\cite{Latremoliere13}}]
  Let $\A_1$ and $\A_2$ be two unital C*-algebras. A \emph{bridge} $\gamma = (\D,x,\pi_1,\pi_2)$ is a given by
  \begin{enumerate}
  \item a unital C*-algebra $\D$,
  \item for each $j\in \{1,2\}$, a unital *-monomorphism $\pi_j : \A_j \hookrightarrow \D$,
  \item an element $x\in \D$, called the \emph{pivot} of $\gamma$, for which there exists a state $\varphi \in \StateSpace(\D)$ such that $\varphi((1-x)^\ast(1-x)) = \varphi((1-x)(1-x)^\ast) = 0$.
  \end{enumerate}
  The \emph{domain} $\dom{\gamma}$ of $\gamma$ is $\A_1$ while the codomain $\codom{\gamma}$ of $\gamma$ is $\A_2$.
\end{definition}

To measure how far apart the domain and codomain of a bridge are, the following two objects which arise immediately from our definition will be helpful.

\begin{notation}[{\cite{Latremoliere13}}]
  Let $\gamma = (\D,x,\pi_\A,\pi_\B)$ be a bridge from $\A$ to $\B$, where $\A$ and $\B$ are unital C*-algebras. The \emph{$1$-level set} of $x$ in $\D$ is the set of states
  \begin{equation*}
    \StateSpace_1(\D|x) = \left\{ \varphi \in \StateSpace(\D) \middle\vert \varphi((1-x)^\ast(1-x)) = \varphi((1-x)(1-x)^\ast) = 0  \right\} \text{.}
  \end{equation*}

  Moreover, we define a seminorm on $\A\oplus\B$ by setting for all $a\in\A$, $b\in \B$:
  \begin{equation*}
    \bridgenorm{\gamma}{a,b} = \norm{\pi_\A(a) x - x \pi_\B(b)}{\D} \text{.}
  \end{equation*}
\end{notation}

We now associate a number to our bridge. In the following definition, it may be helpful to think of the height as measuring how far the pivot is from the identity in a manner employing the quantum metrics. On the other hand, the reach measures how far apart the domain and the codomain are using our almost intertwiner, the pivot, and the seminorm it defines.  Importantly, all the quantities used to quantify a bridge involve the quantum metrics, which are not used in defining the bridge itself.

\begin{definition}[{\cite{Latremoliere13}}]
  Let $(\A_1,\Lip_1)$ and $(\A_2,\Lip_2)$ be two {\qcms}. If $\gamma = (\D,x,\pi_1,\pi_2)$ is a bridge from $\A_1$ to $\A_2$ then:
  \begin{enumerate}
    \item the \emph{height} $\bridgeheight{\gamma}{\Lip_\A,\Lip_\B}$ of $\gamma$ is the real number
      \begin{equation*}
        \max_{j\in\{1,2\}} \Haus{\Kantorovich{\Lip_j}}\left( \StateSpace(\A_j), \left\{ \varphi\circ\pi_j : \varphi \in \StateSpace_1(\D|x) \right\} \right) \text{,}
      \end{equation*}
      \item the \emph{reach} $\bridgereach{\gamma}{\Lip_1,\Lip_2}$ of $\gamma$ is the real number
        \begin{equation*}
          \Haus{\mathrm{bn}_\gamma}\left( \left\{(a,0):a\in\sa{\A_1},\Lip_1(a)\right\}, \left\{(0,b):b\in\sa{\A_2},\Lip_2(b)\leq 1\right\} \right) \text{,}
        \end{equation*}
      \item the \emph{length} $\bridgelength{\gamma}{\Lip_1,\Lip_2}$ of $\gamma$ is $\max\left\{\bridgeheight{\gamma}{\Lip_1,\Lip_2}, \bridgereach{\gamma}{\Lip_1,\Lip_2}\right\}$.
  \end{enumerate}
\end{definition}

We now see that bridges provide a mean to build tunnels. This is how most  non-trivial tunnels are constructed in our work so far. Maybe a main reason for this fact is that constructing L-seminorms is usually delicate, but bridges provide such L-seminorms in a systematic manner and moreover, their length gives an estimate on the extent of the resulting tunnel.

\begin{theorem}[{\cite{Latremoliere13,Latremoliere13b}}]\label{tunnel-from-bridge-thm}
  Let $(\A,\Lip_\A)$ and $(\B,\Lip_\B)$ be two {$F$-\qcms s} for some permissible function $F$. If $\gamma = (\D,x,\pi_\A,\pi_\B)$ is a bridge from $\A$ to $\B$, if $\varepsilon \geq 0$ is chosen so that $\lambda = \bridgelength{\gamma}{\Lip_\A,\Lip_\B} + \varepsilon > 0$, and if, for all $(a,b) \in \sa{\A}\oplus\sa{\B}$, we set
  \begin{equation*}
    \Lip(a,b) = \max\left\{ \Lip_\A(a), \Lip_\B(b), \frac{1}{\lambda}\bridgenorm{\gamma}{a,b} \right\}\text{,}
  \end{equation*}
  then $\left(\A\oplus\B,\Lip,\rho_\A,\rho_\B\right)$, where $\rho_\A:(a,b)\in\A\oplus\B\mapsto a$ and $\rho_\B:(a,b)\in\A\oplus\B\mapsto b$, is an $F$-tunnel from $(\A,\Lip_\A)$ to $(\B,\Lip_\B)$, of extent at most $\lambda$.

In particular
\begin{equation*}
  \dpropinquity{F}((\A,\Lip_\A),(\B,\Lip_\B)) \leq \bridgelength{\gamma}{\Lip_\A,\Lip_\B} \text{.}
\end{equation*}
\end{theorem}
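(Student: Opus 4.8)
The plan is to verify, in order, three things: that $(\A\oplus\B,\Lip)$ is an $F$-\gQqcms, that the coordinate projections $\rho_\A,\rho_\B$ are quantum isometries onto $(\A,\Lip_\A)$ and $(\B,\Lip_\B)$, and that the extent of the resulting tunnel is at most $\lambda$. The closing inequality is then immediate: $\dpropinquity{F}((\A,\Lip_\A),(\B,\Lip_\B))$ is by definition the infimum of extents, so it is bounded by $\lambda=\bridgelength{\gamma}{\Lip_\A,\Lip_\B}+\varepsilon$ for every $\varepsilon>0$, hence by $\bridgelength{\gamma}{\Lip_\A,\Lip_\B}$. Throughout I write $\norm{(a,b)}{\A\oplus\B}=\max\{\norm{a}{\A},\norm{b}{\B}\}$ and note that $\Lip(a,b)$ dominates each of its three constituents $\Lip_\A(a)$, $\Lip_\B(b)$, $\tfrac1\lambda\bridgenorm{\gamma}{a,b}$, and that $\rho_\A^\ast\StateSpace(\A):=\{\varphi\circ\rho_\A:\varphi\in\StateSpace(\A)\}$, likewise $\rho_\B^\ast\StateSpace(\B)$, are convex subsets of $\StateSpace(\A\oplus\B)$.

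For the first task, lower semicontinuity of $\Lip$ is clear, being a maximum of the two lower semicontinuous seminorms $\Lip_\A\circ\rho_\A,\Lip_\B\circ\rho_\B$ and the norm-continuous seminorm $(a,b)\mapsto\tfrac1\lambda\bridgenorm{\gamma}{a,b}$. For the kernel, if $\Lip(a,b)=0$ then $a=s\unit_\A$, $b=t\unit_\B$ are scalars and $\bridgenorm{\gamma}{a,b}=|s-t|\,\norm{x}{\D}$; since the $1$-level set is nonempty, Cauchy--Schwarz against $\varphi((1-x)^\ast(1-x))=0$ gives $\varphi(x)=1$, hence $\norm{x}{\D}\geq1$ and $s=t$, so $(a,b)\in\R\unit_{\A\oplus\B}$. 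The quasi-Leibniz inequality is the genuinely algebraic point: from the derivation-type identity
\begin{equation*}
\pi_\A(aa')x-x\pi_\B(bb')=\pi_\A(a)\bigl(\pi_\A(a')x-x\pi_\B(b')\bigr)+\bigl(\pi_\A(a)x-x\pi_\B(b)\bigr)\pi_\B(b')
\end{equation*}
applied to both orders of the product and averaged, and using that $\pi_\A,\pi_\B$ are isometric, one gets
\begin{equation*}
\bridgenorm{\gamma}{a\circ a',b\circ b'}\leq\norm{(a,b)}{\A\oplus\B}\,\bridgenorm{\gamma}{a',b'}+\norm{(a',b')}{\A\oplus\B}\,\bridgenorm{\gamma}{a,b}
\end{equation*}
and the same bound for the Lie product. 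Dividing by $\lambda$ and invoking $F(x,y,l_x,l_y)\geq xl_y+yl_x$ controls the third constituent, while the quasi-Leibniz inequalities for $\Lip_\A,\Lip_\B$ and the monotonicity of $F$ control the first two. Finally, that $\Kantorovich{\Lip}$ metrizes the weak* topology follows from Rieffel's total-boundedness criterion: fixing the state $\mu_\A\circ\rho_\A$ and writing $a=a_0+s\unit_\A$, $b=b_0+t\unit_\B$, the constraints $\Lip_\A(a)\leq1,\Lip_\B(b)\leq1$ confine $a_0,b_0$ to totally bounded (hence norm-bounded, say by $M_\A,M_\B$) sets, the normalization forces $s=0$, and the estimate $|s-t|\leq|s-t|\,\norm{x}{\D}\leq\bridgenorm{\gamma}{a,b}+(M_\A+M_\B)\norm{x}{\D}$ confines $t$ to a bounded interval, making the relevant ball totally bounded.

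That $\rho_\A$ (and symmetrically $\rho_\B$) is a quantum isometry is where the reach enters. The inequality $\Lip_\A(a)\leq\inf\{\Lip(a,b'):b'\in\sa{\B}\}$ is automatic. For the reverse, given $a$ with $\Lip_\A(a)=\ell$, the bound $\bridgereach{\gamma}{\Lip_\A,\Lip_\B}\leq\lambda$ together with homogeneity produces $b'\in\sa{\B}$ with $\Lip_\B(b')\leq\ell$ and $\bridgenorm{\gamma}{a,b'}\leq\ell\lambda$, i.e. $\tfrac1\lambda\bridgenorm{\gamma}{a,b'}\leq\ell$, so $\Lip(a,b')\leq\ell=\Lip_\A(a)$ and the infimum matches; the scalar case $\ell=0$ is handled by taking $b'$ to be the corresponding scalar.

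The extent estimate is the main obstacle. By definition the extent is the larger of $\Haus{\Kantorovich{\Lip}}(\StateSpace(\A\oplus\B),\rho_\A^\ast\StateSpace(\A))$ and $\Haus{\Kantorovich{\Lip}}(\StateSpace(\A\oplus\B),\rho_\B^\ast\StateSpace(\B))$, and since each target is a convex subset, only the covering direction is nontrivial. Writing an arbitrary state as $t\,\eta\circ\rho_\A+(1-t)\,\theta\circ\rho_\B$ and using the convexity inequality $\Kantorovich{\Lip}(\sum_i t_i\mu_i,\sum_i t_i\nu_i)\leq\sum_i t_i\Kantorovich{\Lip}(\mu_i,\nu_i)$, the problem reduces to showing each $\theta\circ\rho_\B$ lies within $\lambda$ of $\rho_\A^\ast\StateSpace(\A)$ (and symmetrically). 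The two pieces of the length combine through the identity, valid for $\varphi\in\StateSpace_1(\D|x)$, that $\varphi(\pi_\A(a))=\varphi(\pi_\A(a)x)$ and $\varphi(\pi_\B(b))=\varphi(x\pi_\B(b))$, which yields the coupling
\begin{equation*}
|\varphi(\pi_\A(a))-\varphi(\pi_\B(b))|\leq\bridgenorm{\gamma}{a,b}\text{.}
\end{equation*}
Choosing, via $\bridgeheight{\gamma}{\Lip_\A,\Lip_\B}\leq\lambda$, a state $\varphi\in\StateSpace_1(\D|x)$ whose restriction $\varphi\circ\pi_\B$ approximates $\theta$ and setting $\eta=\varphi\circ\pi_\A$, one estimates $|\theta(b)-\eta(a)|$ over $\Lip(a,b)\leq1$ by separating the height term from the bridge-seminorm term. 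The delicate point---the technical heart carried out in \cite{Latremoliere13,Latremoliere13b}---is the bookkeeping that keeps this combination bounded by the \emph{length} $\max\{\bridgeheight{\gamma}{\Lip_\A,\Lip_\B},\bridgereach{\gamma}{\Lip_\A,\Lip_\B}\}$, and hence by $\lambda$, rather than by a cruder sum of the height and the reach. Granting the extent bound, the final displayed inequality follows upon letting $\varepsilon\downarrow0$.
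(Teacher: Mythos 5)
Most of your proposal is sound, and the parts you prove in detail are indeed the routine parts, handled essentially the standard way: the kernel and lower semicontinuity of $\Lip$, the quasi-Leibniz inequality via the derivation identity for $\bridgenorm{\gamma}{\cdot,\cdot}$, the metrization of the weak* topology through Rieffel's total-boundedness criterion (with the pivot giving $\norm{x}{\D}\geq 1$ and hence control of the scalar part), and the identification of the quantum-isometry property of $\rho_\A,\rho_\B$ with the reach of $\gamma$ are all correct. (One small repair: Hausdorff-distance infima need not be attained, so you can only produce $b'$ with $\bridgenorm{\gamma}{a,b'}\leq\ell\left(\bridgereach{\gamma}{\Lip_\A,\Lip_\B}+\delta\right)$ for arbitrary $\delta>0$; since a quantum isometry only requires an infimum, letting $\delta\downarrow 0$ fixes this.)

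The genuine gap is the extent estimate, which is the entire nontrivial content of the theorem and which you explicitly ``grant'' rather than prove. This is not deferred bookkeeping: along the route you set up---pick $\varphi\in\StateSpace_1(\D|x)$ with $\varphi\circ\pi_\B$ close to $\theta$, set $\eta=\varphi\circ\pi_\A$, and split $|\theta(b)-\eta(a)|$ into a height term plus a bridge-seminorm term---the bound you obtain is $\bridgeheight{\gamma}{\Lip_\A,\Lip_\B}+\lambda$, roughly $2\lambda$, and no choice of $\eta$ can do better with the data as you (and this survey) state it. Indeed, take $\A=\B=C(\{p,q\})$ with $d(p,q)=2$, $\D=\C^3$, pivot $x=(1,1,0)$, $\pi_\A(a)=(a(p),a(q),a(q))$ and $\pi_\B(b)=(b(p),b(p),b(q))$: the $1$-level set pulls back to the single state $\mathrm{ev}_p$ on $\B$, so the height is $2$, the reach is $1$, and $\lambda=2+\varepsilon$; yet the pair $a=(2+\varepsilon)\unit_\A$, $b=(0,-2)$ satisfies $\Lip(a,b)=1$, while $\eta(a)=2+\varepsilon$ for \emph{every} $\eta\in\StateSpace(\A)$, so $|\eta(a)-\mathrm{ev}_q(b)|=4+\varepsilon>\lambda$ for every $\eta$, and the extent of this tunnel is at least $4+\varepsilon$. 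So the step you left to the references is precisely the step that cannot be completed along the lines you propose: your argument can only deliver an extent bound of order $\bridgereach{\gamma}{\Lip_\A,\Lip_\B}+\bridgeheight{\gamma}{\Lip_\A,\Lip_\B}$, hence the closing inequality $\dpropinquity{F}\leq\bridgelength{\gamma}{\Lip_\A,\Lip_\B}$ only with an extra factor of $2$. Recovering the bound as stated requires the finer formulation and lemmas of the original papers (whose definitions the survey compresses), and your proposal does not reconstruct that mechanism.
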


We used the construction of appropriate bridges to prove the following examples of convergence for the dual propinquity.

\begin{example}[{\cite{Latremoliere13b}}]
  Let $\ell$ be a continuous length function on $\T^d$. For any $G\subseteq \T^d$ a closed subgroup and $\sigma$ a multiplier of the Pontryagin dual $\widehat{G}$ of $G$, for any $a\in C^\ast(\widehat{G},\sigma)$, we set as in Example (\ref{group-lip-ex}):
  \begin{equation*}
    \Lip_{G,\sigma}(a) = \sup\left\{\frac{\|\alpha^g(a)-a\|_{C^\ast(\widehat{G},\sigma)}}{\ell(g)} : g\in G\setminus\{1\} \right\}
    \end{equation*}
    where $\alpha$ is the dual action of $G$ on $C^\ast(\widehat{G},\sigma)$.
    
    If $(G_n)_{n\in\N}$ is a sequence of closed subgroups of $\T^d$ converging to $\T^d$ for the Hausdorff distance $\Haus{\ell}$, and if $(\sigma_n)_{n\in\N}$ is a sequence of multipliers of $\Z^d$ converging pointwise to some $\sigma$, with $\sigma_n(g)=1$ if $g$ is the coset of $0$ for $\widehat{G_n}$ , then:
    \begin{equation*}
      \lim_{n\rightarrow\infty} \dpropinquity{}((C^\ast(\widehat{G_n},\sigma_n),\Lip_{\widehat{G_n},\sigma_n}),(C^\ast(\Z^d,\sigma),\Lip_{\Z^d,\sigma})) = 0 \text{.}
    \end{equation*}

    In particular, the function which maps a multiplier to a quantum  torus is continuous for the dual  propinquity, and quantum tori are limits of fuzzy tori.
\end{example}

\begin{example}[{\cite{Latremoliere16}}]
  Noncommutative solenoids are limits of quantum tori, and consequently, limits of fuzzy tori, for the dual propinquity, for the appropriate choice of a metric on the solenoid groups.
\end{example}

\begin{example}[{\cite{Latremoliere15d}}]
  We use the same notation as in Example (\ref{AF-ex}). We then have:
  \begin{itemize}
   \item $(\A,\Lip) = \dpropinquity{}-\lim_{n\rightarrow\infty}(\A_n,\Lip)$,
   \item the natural map from the Baire space to UHF algebras is Lipschitz.
\end{itemize}

  Another example is given by the Effros-Shen algebras \cite{Effros80b}. For $\theta \in (0,1)\setminus\Q$, let $\theta = \lim_{n\rightarrow\infty}\frac{p_n^\theta}{q_n^\theta}$ with $\frac{p_n^\theta}{q_n^\theta} = \cfrac{1}{a_1 + \cfrac{1}{a_2 + \cfrac{1}{\ddots + \frac{1}{a_n}}}}$ for $a_1,\ldots\in\N$.
   
  Set $\alg{AF}_\theta = \varinjlim_{n\rightarrow\infty} \left(\alg{M}_{q_n} \oplus \alg{M}_{q_{n-1}}, \psi_{n,\theta} \right)$ where $\psi_{n,\theta}$ is defined by:
  \begin{equation*}
    (a,b) \in \alg{M}_{q_n}\oplus\alg{M}_{q_{n-1}} \mapsto \left( \begin{pmatrix} a & & & \\ & \ddots & & \\ & & a & \\ & & & b\end{pmatrix}, a   \right)  \text{.}
  \end{equation*}
    
  Let $\Lip_\theta$ the L-seminorm for this data as in Example (\ref{AF-ex}).
      For all $\theta\in(0,1)\setminus\Q$, we have:
      \begin{equation*}
        \lim_{\substack{\vartheta\rightarrow\theta\\ \vartheta\not\in\Q}} \dpropinquity{}((\alg{AF}_\vartheta,\Lip_\vartheta),(\alg{AF}_\theta,\Lip_\theta)) = 0 \text{.}
      \end{equation*}

\end{example}

A final remark in this section concerns the following question: could we choose, in the construction of the dual propinquity, only those tunnels which emerge from bridges as in Theorem (\ref{tunnel-from-bridge-thm})? We do not know, nor believe, that tunnels obtained from bridges form an appropriate class with the class of all $F$-{\qcms s} for any particular permissible function $F$. The problem can be summed up by saying that given two bridges $\gamma_1$ and $\gamma_2$ with $\codom{\gamma_1} = \dom{\gamma_2}$, we do not know how to build a single bridge whose length is approximately the sum of the lengths of $\gamma_1$ and $\gamma_2$, going from $\dom{\gamma_1}$ to $\codom{\gamma_2}$. This, in turn, means that our construction of the propinquity would fail to satisfy the triangle inequality. This issue was nontrivial and is the tip of the iceberg regarding the difficulties of working with Leibniz or quasi-Leibniz seminorms. 

However, \cite[Theorem 3.1]{Latremoliere14} does essentially show us how to take the ``closure'' of the class of all tunnels-from-bridges to obtain an appropriate class for all {\qcms s}. It turns out that in the case of tunnels constructed from bridges, \cite[Theorem 3.1]{Latremoliere14} can be seen as introducing between $\gamma_1$ and $\gamma_2$, as above, a very short bridge from $\codom{\gamma_1}$ to $\dom{\gamma_2}$. Thus, \cite[Theorem 3.1]{Latremoliere14} dictates that we really want to work, not just with bridges, but with tunnels built by a finite collection of bridges, each ending where the next starts. The very short in-between bridges can in fact be taken so short as to have length zero in this case, and disappear---leaving us with the idea of \emph{treks} which we used, in our first work on the propinquity \cite{Latremoliere13}, to define a first form of the propinquity called the quantum propinquity.

We note that the length of a bridge dominates the extent of its canonically associated tunnel by Theorem (\ref{tunnel-from-bridge-thm}), so even reconciling the idea of treks with the idea of almost composition of tunnels as in \cite[Theorem 3.1]{Latremoliere14} does not mean the quantum propinquity, which uses the length of bridges directly rather than the extent of the associated tunnels, is equal to the dual propinquity---the former still dominates the latter as far as we can tell. But the two pictures are now closer together.

As is seen for instance with quantum tori, group actions can be used as a means of transport of structure to define a noncommutative geometry. Semigroups, or rather semigroups of completely positive unital maps, can be interpreted as a form of noncommutative heat semigroups, and give rise to a differential calculus where the generator of the semigroup is a noncommutative Laplacian. Both are very interesting and important models for noncommutative geometry. More generally, symmetries have been a central concept in the development of mathematical physics models in particle physics, so, with our motivation for this research program  in mind, we want to understand: what is the interplay between symmetries as encoded in group actions, and dynamics encoded as group or semigroup actions, and the dual propinquity, and our approach to noncommutative metric geometry? There are actually some rather pleasant facts regarding these matters, and we begin by describing how to capture group and semigroup actions in our metric framework---only to see later a nice compactness-type result which shows that the dual propinquity is keen to remember some symmetries of spaces, under natural non-degeneracy conditions.

\section{The Covariant Propinquity}

We begin by defining the objects on which we are going to define an extension of the dual propinquity. The idea is to bring together a metrized group or semigroup, a {\qcms}, and an action of the former on the latter by Lipschitz  morphisms, or at least Lipschitz linear maps.

We thus begin with:

\begin{definition}
  A \emph{proper monoid} $(G,\delta)$ is a monoid (i.e. a set $G$ endowed with an associative binary operation and with an identity element) and a left-invariant distance $\delta$ on $G$ whose closed balls are all compact.

  For any two proper monoids $(G,\delta_G)$ and $(H,\delta_H)$, a \emph{proper monoid morphism} $\pi : (G,\delta_G) \rightarrow (H,\delta_H)$ is a map from $G$ to $H$ such that:
  \begin{itemize}
    \item $\pi$ maps the identity element of $G$ to the identity element of $H$,
    \item $\forall g,h \in G \quad \pi(g h) = \pi(g) \pi(h)$,
    \item $\pi$ is continuous.
  \end{itemize}
\end{definition}

\begin{definition}[{\cite{Latremoliere18b}}]
  Let $F$ be a permissible function. A \emph{Lipschitz dynamical $F$-system} $(\A,\Lip,G,\delta,\alpha)$ is given by:
\begin{enumerate}
  \item an $F$-\qcms $(\A,\Lip)$,
  \item a proper monoid $(G,\delta)$,
  \item a strongly continuous action $\alpha$ of $G$ on $\A$: for all $a\in\A$ and $g \in G$, we have:
    \begin{equation*}
      \lim_{h\rightarrow g} \norm{\alpha^h(a) - \alpha^g(a)}{\A} = 0\text{,}
    \end{equation*}
  \item $g\in G\mapsto \dil{\alpha^g}$ is locally bounded: for all $\varepsilon>0$ and $g\in G$ there exist $D > 0$ and a neighborhood $U$ of $g$ in $G$ such that if $h\in U$ then $\dil{\alpha^h} \leq D$.
\end{enumerate}

A \emph{Lipschitz $C^\ast$-dynamical $F$-system} $(\A,\Lip,G,\delta,\alpha)$ is a Lipschitz dynamical system where $G$ is a proper group and $\alpha^g$ is a Lipschitz unital *-automorphism for all $g \in G$.
\end{definition}

We now wish to endow classes of Lipschitz dynamical systems with a sort of dual propinquity. To this end, we first must understand how to define a Gromov-Hausdorff distance for proper monoids. We propose the following construction, which encompasses natural ideas, but in a manner which defines a nice metric.

\begin{notation}
  For a metric space $(X,\delta)$, if $x\in X$ and $r\geq 0$, then the closed ball in $(X,\delta)$ centered at $x$, of radius $r$, is denoted $X_\delta[x,r]$, or simply $X[x,r]$.
  If $(G,\delta)$ is a metric monoid with identity element $e \in G$, and if $r \geq 0$, then $G[e,r]$ is denoted by $G[r]$. 
\end{notation}

We define our distance between two proper metric monoids $(G_1,\delta_1)$ and $(G_2,\delta_2)$ by measuring how far a given pair of maps $\varsigma_1:G_1\rightarrow G_2$ and $\varsigma_2 : G_2\rightarrow G_1$ are from being an isometric isomorphism and its inverse. 

\begin{definition}[{\cite{Latremoliere18b}}]\label{almost-iso-def}
Let $(G_1,\delta_1)$ and $(G_2,\delta_2)$ be two metric monoids with identity elements $e_1$ and $e_2$. An \emph{$r$-local $\varepsilon$-almost isometric isomorphism} $(\varsigma_1,\varsigma_2)$, for $\varepsilon \geq 0$ and $r \geq 0$,  is an ordered pair of maps $\varsigma_1 : G_1[r] \rightarrow G_2$ and $\varsigma_2 : G_2[r] \rightarrow G_1$ such that for all $\{j,k\} = \{1,2\}$:
\begin{equation*}
\forall g,g' \in G_j[r] \quad \forall h \in G_k[r] \quad \left| \delta_k(\varsigma_j(g)\varsigma_j(g'),h) - \delta_j(gg',\varsigma_k(h))\right| \leq \varepsilon\text{,}
\end{equation*}
and
\begin{equation*}
\varsigma_j(e_j) = e_k \text{.}
\end{equation*}
 The set of all $r$-local $\varepsilon$-almost isometric isomorphism is denoted by:
\begin{equation*}
  \UIso{\varepsilon}{(G_1,\delta_1)}{(G_2,\delta_2)}{r} \text{.}
\end{equation*}
\end{definition}

Our covariant Gromov-Hausdorff distance over the class of proper metric monoids is then defined along the lines of Gromov's distance. The bound $\frac{\sqrt{2}}{2}$ is just here to ensure that our metric satisfies the triangle inequality. Our construction follows Gromov's insight on how to define an intrinsic Hausdorff distance between pointed, proper spaces---rather than the Edwards definition we used for {\qcms s}---where we chose as base point the identity elements.

\begin{definition}[{\cite{Latremoliere18b}}]\label{group-GH-def}
The \emph{Gromov-Hausdorff monoid distance} $\Upsilon((G_1,\delta_1),(G_2,\delta_2))$ between two proper metric monoids $(G_1,\delta_1)$ and $(G_2,\delta_2)$ is given by:
  \begin{multline*}
    \Upsilon((G_1,\delta_1),(G_2,\delta_2)) = \\
    \min\left\{ \frac{\sqrt{2}}{2}, \inf\left\{ \varepsilon > 0 \middle\vert \UIso{\varepsilon}{(G_1,\delta_1)}{(G_2,\delta_2)}{\frac{1}{\varepsilon}} \not= \emptyset \right\}\right\} \text{.}
  \end{multline*}
\end{definition}

We then record:
\begin{theorem}[{\cite{Latremoliere18b}}]\label{upsilon-metric-thm}
For any proper metric monoids $(G_1,\delta_1)$, $(G_2,\delta_2)$ and $(G_3,\delta_3)$:
\begin{enumerate}
\item $\Upsilon((G_1,\delta_1),(G_2,\delta_2)) \leq \frac{\sqrt{2}}{2}$,
\item $\Upsilon((G_1,\delta_1),(G_2,\delta_2)) = \Upsilon((G_2,\delta_2),(G_1,\delta_1))$,
\item $\Upsilon((G_1,\delta_1),(G_3,\delta_3)) \leq \Upsilon((G_1,\delta_1),(G_2,\delta_2)) + \Upsilon((G_2,\delta_2),(G_3,\delta_3))$,
\item If $\Upsilon((G_1,\delta_1),(G_2,\delta_2)) = 0$ if and only if there exists a monoid isometric isomorphism from $(G_1,\delta_1)$ to $(G_2,\delta_2)$.
\end{enumerate}
In particular, $\Upsilon$ is a metric up to metric group isometric isomorphism on the class of proper metric groups.

Moreover, if $(G,\delta_G)$ and $(H,\delta_H)$ are two proper metric monoids with units $e_G$ and $e_H$ then:
\begin{equation*}
\mathrm{GH}((G,\delta_G,e_G),(H,\delta_H,e_H)) \leq \Upsilon((G,\delta_G),(H,\delta_H))\text{,}
\end{equation*}
where $\mathrm{GH}$ is the Gromov-Hausdorff distance for pointed, proper metric spaces \cite{Gromov81}.
\end{theorem}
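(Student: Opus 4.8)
The plan is to verify the four numbered properties in turn and then derive the comparison with the pointed Gromov-Hausdorff distance. Property (1) is immediate from the $\min$ with $\frac{\sqrt{2}}{2}$ appearing in Definition (\ref{group-GH-def}), so no work is needed there. For symmetry, Property (2), the key observation is that the defining inequality in Definition (\ref{almost-iso-def}) is itself symmetric under exchanging the roles of the indices $\{j,k\}=\{1,2\}$: if $(\varsigma_1,\varsigma_2)$ lies in $\UIso{\varepsilon}{(G_1,\delta_1)}{(G_2,\delta_2)}{r}$, then $(\varsigma_2,\varsigma_1)$ lies in $\UIso{\varepsilon}{(G_2,\delta_2)}{(G_1,\delta_1)}{r}$, since the quantifier ranges over both $j=1,k=2$ and $j=2,k=1$. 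Hence the two infima defining $\Upsilon$ coincide.

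The substantive work is the triangle inequality, Property (3). First I would fix $\varepsilon_1 > \Upsilon((G_1,\delta_1),(G_2,\delta_2))$ and $\varepsilon_2 > \Upsilon((G_2,\delta_2),(G_3,\delta_3))$ and select almost isometric isomorphisms $(\varsigma_1,\varsigma_2)$ and $(\eta_2,\eta_3)$ in the respective $\UIso$ sets, at radii $\frac{1}{\varepsilon_1}$ and $\frac{1}{\varepsilon_2}$. The natural candidate is the composition $(\eta_2\circ\varsigma_1, \varsigma_2\circ\eta_3)$, suitably restricted so that the inner map's output lands in the domain of the outer map. The goal is to show this composition is an $r$-local $(\varepsilon_1+\varepsilon_2)$-almost isometric isomorphism for $r = \frac{1}{\varepsilon_1+\varepsilon_2}$. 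The main obstacle, and the reason for the peculiar $\frac{\sqrt{2}}{2}$ bound, is a domain-matching issue: to apply the defining inequality for $(\eta_2,\eta_3)$ to the element $\varsigma_1(g)$, one must guarantee $\varsigma_1(g) \in G_2[\frac{1}{\varepsilon_2}]$ whenever $g \in G_1[\frac{1}{\varepsilon_1+\varepsilon_2}]$. The almost-isometry inequality with $g'=e_j$ and the normalization $\varsigma_j(e_j)=e_k$ lets one control $\delta_2(\varsigma_1(g),e_2)$ in terms of $\delta_1(g,e_1)+\varepsilon_1$; combined with $\frac{1}{\varepsilon_1+\varepsilon_2}+\varepsilon_1 \leq \frac{1}{\varepsilon_2}$, which holds precisely when the $\varepsilon_i$ are bounded by $\frac{\sqrt{2}}{2}$, this places $\varsigma_1(g)$ inside the required ball. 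Granting this, I would estimate $|\delta_3(\eta_2\varsigma_1(g)\,\eta_2\varsigma_1(g'),h) - \delta_1(gg',\varsigma_2\eta_3(h))|$ by inserting the intermediate term $\delta_2(\varsigma_1(g)\varsigma_1(g'),\eta_3(h))$ and applying the two almost-isometry inequalities, yielding the error bound $\varepsilon_1+\varepsilon_2$ by the triangle inequality for real numbers. Taking infima over $\varepsilon_1,\varepsilon_2$ and intersecting with the ceiling $\frac{\sqrt{2}}{2}$ then gives Property (3).

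For Property (4), the direction ``isometric isomorphism implies distance zero'' is clear by taking the global isomorphism and its inverse as $(\varsigma_1,\varsigma_2)$ at every radius, giving $\varepsilon=0$. The converse requires a compactness and diagonalization argument: assuming $\Upsilon = 0$, I would choose for each $n$ an $n$-local $\frac{1}{n}$-almost isometric isomorphism, use properness (closed balls compact) to extract, via an Arzelà-Ascoli-type diagonal argument over an exhausting sequence of balls, a subsequential pointwise limit $\varsigma_\infty:G_1\to G_2$ together with its companion; the limiting inequality forces $\delta_2(\varsigma_\infty(g)\varsigma_\infty(g'),\varsigma_\infty(gg'))=0$, so $\varsigma_\infty$ is a monoid homomorphism, while the $g'=e$ case forces it to be distance-preserving, and the companion limit provides a two-sided inverse, yielding the isometric isomorphism. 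Finally, the comparison $\mathrm{GH}((G,\delta_G,e_G),(H,\delta_H,e_H)) \leq \Upsilon((G,\delta_G),(H,\delta_H))$ follows by discarding the multiplicative structure: an $r$-local $\varepsilon$-almost isometric isomorphism, read with $g'=e_j$, reduces the displayed inequality to $|\delta_k(\varsigma_j(g),h)-\delta_j(g,\varsigma_k(h))| \leq \varepsilon$, which is exactly the correspondence condition controlling Gromov's pointed Gromov-Hausdorff distance at radius $r=\frac{1}{\varepsilon}$, with the base points matched by $\varsigma_j(e_j)=e_k$. Taking the infimum over admissible $\varepsilon$ gives the stated bound. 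I expect the triangle inequality's domain-matching step and the diagonalization in Property (4) to be the two genuinely delicate points.
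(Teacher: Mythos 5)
Your plan follows what is, in essence, the intended route (note that this survey states the theorem without proof, deferring to \cite{Latremoliere18b}): properties (1) and (2) are immediate, as you say; for the triangle inequality your composition $(\eta_2\circ\varsigma_1,\varsigma_2\circ\eta_3)$ with the domain-matching estimate is exactly the right mechanism, and you correctly isolate why the cap $\frac{\sqrt{2}}{2}$ is there --- note only that $\frac{1}{\varepsilon_1+\varepsilon_2}+\varepsilon_1\leq\frac{1}{\varepsilon_2}$ is equivalent to $\varepsilon_2(\varepsilon_1+\varepsilon_2)\leq 1$, which \emph{holds when} both $\varepsilon_i\leq\frac{\sqrt{2}}{2}$ (your ``precisely when'' overstates this, harmlessly). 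The pointed Gromov-Hausdorff comparison via the $g'=e_j$ specialization, which produces a correspondence of distortion at most $2\varepsilon$ between the balls of radius $\frac{1}{\varepsilon}$ with matched base points, is also sound.

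The genuine gap is in the converse direction of property (4), at the step ``the limiting inequality forces $\delta_2(\varsigma_\infty(g)\varsigma_\infty(g'),\varsigma_\infty(gg'))=0$.'' Two points. First, a repairable one: Arzel{\`a}-Ascoli does not apply off the shelf, since the maps $\varsigma_1^{(n)}$ are not assumed continuous; you must first derive the estimate $|\delta_2(\varsigma_1^{(n)}(g),\varsigma_1^{(n)}(g'))-\delta_1(g,g')|\leq 2\varepsilon_n$ on suitable balls (take $g'=e_1$ in the defining inequality to get $|\delta_2(\varsigma_1^{(n)}(g),h)-\delta_1(g,\varsigma_2^{(n)}(h))|\leq\varepsilon_n$, then set $h=\varsigma_1^{(n)}(g')$ and use $\delta_1(g',\varsigma_2^{(n)}\varsigma_1^{(n)}(g'))\leq\varepsilon_n$); this supplies the equicontinuity and lets the pointwise limit on a countable dense set extend to an isometry. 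Second, and more seriously: what the inequalities actually yield in the limit is $\lim_n\varsigma_1^{(n)}(g)\varsigma_1^{(n)}(g')=\varsigma_\infty(gg')$, i.e.\ \emph{products of the approximants} converge to $\varsigma_\infty(gg')$. To conclude multiplicativity you must also know $\varsigma_1^{(n)}(g)\varsigma_1^{(n)}(g')\to\varsigma_\infty(g)\varsigma_\infty(g')$. Left-invariance of $\delta_2$ makes left translations isometries, hence handles convergence in the second factor, but convergence in the first factor requires continuity of right translations, which is \emph{not} part of this paper's stated definition of a proper monoid. So either you must invoke continuity of the multiplication as part of the definition of a metric monoid --- as in the fuller definition in \cite{Latremoliere18b} --- or supply an argument for it; as written, the identification does not follow. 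That this is not a pedantic point is reflected in the survey itself: control of right translations is precisely the ``regularity'' hypothesis of Definition (\ref{regular-sequence-def}) that must be added in Theorem (\ref{Upsilon-completeness-thm}) to make limit constructions for $\Upsilon$ work.
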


We now have a metric on the class of proper monoids and a metric on the class of {\qcms s}---the dual propinquity discussed in the previous section. We want to bring them together. We propose to merge the notion of tunnel and the notion of almost isometry as follows.

\begin{definition}[{\cite{Latremoliere18b}}]\label{equi-tunnel-def}
Let $\varepsilon > 0$ and $F$ be a permissible function. Let $(\A_1,\Lip_1,G_1,\allowbreak \delta_1,\alpha_1)$ and $(\A_2,\Lip_2,G_2,\delta_2,\alpha_2)$ be two Lipschitz dynamical $F$-systems. A \emph{$\varepsilon$-covariant $F$-tunnel}
\begin{equation*}
\tau = (\upsilon,\varsigma_1,\varsigma_2)
\end{equation*}
from $(\A_1,\Lip_1,G_1,\delta_1,\alpha_1)$ to $(\A_2,\Lip_2,G_2,\delta_2,\alpha_2)$ is given by an $F$-tunnel $\upsilon$ from $(\A_1,\Lip_1)$ to $(\A_2,\Lip_2)$ and a pair
\begin{equation*}
  (\varsigma_1,\varsigma_2) \in \UIso{\varepsilon}{(G_1,\delta_1)}{(G_2,\delta_2)}{\frac{1}{\varepsilon}} \text{.}
\end{equation*}
\end{definition}

We make two remarks. First, a covariant tunnel does not involve any action on the underlying tunnel: the actions of the domain and codomain are not involved in the definition of the covariant tunnels themselves. We will include these actions in our quantification of  a covariant tunnel later on. Second, we do work with a quantified almost isometry, rather than just a pair of unit-preserving maps. This construct is the path we use to define the covariant propinquity, as it seems to make it easiest to prove such properties as the triangle inequality.

We now quantify covariant tunnels. Of course, covariant tunnels come with a number which is related to the metric $\Upsilon$ by definition, and we also have the extent of the underlying tunnel available to us. What is left is to involve the actual actions in some measurement. The following concept is a generalization of the reach of a tunnel as defined in \cite{Latremoliere13b}. 

\begin{definition}[{\cite{Latremoliere18b}}]\label{reach-def}
  Let $\varepsilon > 0$. Let $\mathds{A}_1 = (\A_1,\Lip_1,G_1,\delta_1,\alpha_1)$ and $\mathds{A}_2 = (\A_2,\Lip_2,\allowbreak G_2,\delta_2,\allowbreak \alpha_2)$ be two Lipschitz dynamical systems. The \emph{$\varepsilon$-reach $\tunnelreach{\tau}{\varepsilon}$} of a $\varepsilon$-covariant tunnel $\tau = (\D,\Lip_\D,\pi_1,\pi_2,\varsigma_1,\varsigma_2)$ from $\mathds{A}_1$ to $\mathds{A}_2$ is given as:
  \begin{equation*}
\max_{\{j,k\}=\{1,2\}}\sup_{\varphi\in\StateSpace(\A_j)} \inf_{\psi\in\StateSpace(\A_k)}\sup_{g \in G_j\left[\frac{1}{\varepsilon}\right]} \Kantorovich{\Lip_\D}(\varphi\circ\alpha_j^g\circ\pi_j, \psi\circ\alpha_k^{\varsigma_j(g)}\circ\pi_k)
\end{equation*}

\end{definition}

We now bring all the data we have so far on covariant tunnels into one quantity.

\begin{definition}[{\cite{Latremoliere18b}}]
The \emph{$\varepsilon$-magnitude} $\tunnelmagnitude{\tau}{\varepsilon}$ of a $\varepsilon$-covariant tunnel $\tau$ is the maximum of its $\varepsilon$-reach and its extent:
  \begin{equation*}
    \tunnelmagnitude{\tau}{\varepsilon} = \max\left\{ \tunnelreach{\tau}{\varepsilon}, \tunnelextent{\tau} \right\} \text{.}
  \end{equation*}
\end{definition}

As with the dual propinquity, we have a natural notion of an appropriate class of covariant tunnels.

\begin{definition}\label{appropriate-def}
  Let $F$ be a permissible function. Let $\mathcal{C}$ be a nonempty class of Lipschitz dynamical $F$-systems. A class $\mathcal{T}$ of covariant $F$-tunnels is \emph{appropriate} for $\mathcal{C}$ when:
  \begin{enumerate}
    \item for all $\mathds{A},\mathds{B} \in \mathcal{C}$, there exists a $\varepsilon$-covariant tunnel from $\mathds{A}$ to $\mathds{B}$ for some $\varepsilon > 0$,
    \item if $\tau \in \mathcal{T}$, then there exist $\mathds{A},\mathds{B} \in \mathcal{C}$ such that $\tau$ is a covariant tunnel from $\mathds{A}$ to $\mathds{B}$,
    \item if $\mathds{A} = (\A,\Lip_\A,G,\delta_G,\alpha)$, $\mathds{B} = (\B,\Lip_\B,H,\delta_H,\beta)$ are elements of $\mathcal{C}$, and if there exists an equivariant full quantum isometry $(\pi,\varsigma)$ from $\mathds{A}$ to $\mathds{B}$, then:
      \begin{equation*}
        \left( \A,\Lip_\A,\mathrm{id}_\A,\pi,\varsigma,\varsigma^{-1} \right), \left( \B,\Lip_\B,\pi^{-1},\mathrm{id}_\B,\varsigma^{-1},\varsigma \right)  \in \mathcal{T}\mathcal{,}
      \end{equation*}
      where $\mathrm{id}_\A$, $\mathrm{id}_\A$ are the identity *-automorphisms of $\A$ and $\B$,
    \item if $\tau = (\D,\Lip,\pi,\rho,\varsigma,\varkappa) \in \mathcal{T}$ then $\tau^{-1} = (\D,\Lip,\rho,\pi,\varkappa,\varsigma) \in \mathcal{T}$,
    \item if $\varepsilon > 0$ and if $\tau_1,\tau_2 \in \mathcal{T}$ are $\frac{\sqrt{2}}{2}$-tunnels, then there exists $\delta \in (0,\varepsilon]$ such that $\tau_1\circ_\delta \tau_2 \in \mathcal{T}$.
  \end{enumerate}
\end{definition}

\begin{notation}
 Let $\mathcal{T}$ be a class of covariant tunnels appropriate for a nonempty class of Lipschitz $F$-dynamical systems $\mathcal{C}$. The set of all $\varepsilon$-covariant $F$-tunnels in $\mathcal{T}$, for any $\varepsilon > 0$, from $\mathds{A}$ to $\mathds{B}$, both chosen in $\mathcal{C}$, will be denoted by:
  \begin{equation*}
    \tunnelset{\mathds{A}}{\mathds{B}}{\mathcal{T}}{\varepsilon} \text{.}
  \end{equation*}
  When $\mathcal{T}$ is simply the class of all Lipschitz $F$-dynamical systems, we shall then write the class of all $F$-tunnels from $\mathds{A}$ to $\mathds{B}$ as:
  \begin{equation*}
        \tunnelset{\mathds{A}}{\mathds{B}}{F}{\varepsilon} \text{.}
  \end{equation*}  
\end{notation}

We now can define the covariant propinquity between Lipschitz dynamical systems.

\begin{definition}[{\cite{Latremoliere18b}}]\label{covariant-propinquity-def}
  Let $\mathcal{C}$ be a nonempty class of Lipschitz $F$-dynamical systems for a permissible function $F$  and let $\mathcal{T}$ be a class of covariant tunnels appropriate for $\mathcal{C}$. For $\mathds{A},\mathds{B} \in \mathcal{C}$, the \emph{covariant $\mathcal{T}$-propinquity} $\covpropinquity{\mathcal{T}}(\mathds{A},\mathds{B})$ is defined as:
\begin{equation*}
  \min\left\{ \frac{\sqrt{2}}{2}, \inf\left\{ \varepsilon > 0 \middle\vert \exists \tau \in \tunnelset{\mathds{A}}{\mathds{B}}{\mathcal{T}}{\varepsilon} \quad \tunnelmagnitude{\tau}{\varepsilon} \leq \varepsilon \right\} \right\} \text{.}
\end{equation*}
\end{definition}

Definition (\ref{covariant-propinquity-def}) indeed defines a metric up to the equivariant full quantum isometries:

\begin{theorem}[{\cite{Latremoliere18b}}]
  Let $\mathcal{C}$ be a nonempty class of Lipschitz $F$-dynamical systems for a permissible function $F$  and let $\mathcal{T}$ be a class of covariant tunnels appropriate for $\mathcal{C}$. If $(\A,\Lip_\A,G,\delta_G,\alpha)$ and $(\B,\Lip_\B,H,\delta_H,\allowbreak \beta)$ in $\mathcal{C}$ then:
   \begin{equation*}
     \covpropinquity{\mathcal{T}}((\A,\Lip_\A,G,\delta_G,\alpha),(\B,\Lip_\B,H,\delta_H,\beta)) = 0
   \end{equation*}
if and only if there exists a full quantum isometry $\pi : (\A,\Lip_\A)\rightarrow(\B,\Lip_\B)$ and an isometric isomorphism of monoids $\varsigma: G\rightarrow H$ such that:
   \begin{equation*}
     \forall g \in G \quad \varphi\circ\alpha^g = \beta^{\varsigma(g)}\circ\varphi \text{,}
   \end{equation*}
i.e. $(\A,\Lip_\A,G,\delta_G,\alpha)$ and $(\B,\Lip_\B,H,\delta_H,\beta)$ are isomorphic as Lipschitz dynamical systems.
\end{theorem}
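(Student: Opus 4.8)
The plan is to prove both implications of this characterization of null covariant propinquity, with the reverse implication (distance zero forcing a dynamical-system isomorphism) carrying essentially all the difficulty.

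For the implication that an isomorphism of Lipschitz dynamical systems gives covariant propinquity zero, I would argue constructively. Assume $(\pi,\varsigma)$ is an equivariant full quantum isometry, so $\pi\circ\alpha^g = \beta^{\varsigma(g)}\circ\pi$ for all $g\in G$. Axiom (3) in the definition of an appropriate class of covariant tunnels then places $\tau = (\A,\Lip_\A,\mathrm{id}_\A,\pi,\varsigma,\varsigma^{-1})$ in $\mathcal{T}$, as a covariant tunnel from $\mathds{A}$ to $\mathds{B}$. I would check that its magnitude vanishes at every scale: the underlying tunnel $(\A,\Lip_\A,\mathrm{id}_\A,\pi)$ has extent $0$, since $\mathrm{id}_\A$ and $\pi$ are both full quantum isometries and hence the two images of state spaces both equal $\StateSpace(\A)$. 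For the reach, given $\varphi\in\StateSpace(\A)$ the choice $\psi = \varphi\circ\pi^{-1}$ gives $\psi\circ\beta^{\varsigma(g)}\circ\pi = \varphi\circ\pi^{-1}\circ\beta^{\varsigma(g)}\circ\pi = \varphi\circ\alpha^g$ by equivariance, so every term in $\tunnelreach{\tau}{\varepsilon}$ is zero. Hence $\tunnelmagnitude{\tau}{\varepsilon}=0$ for all $\varepsilon>0$ and $\covpropinquity{\mathcal{T}}(\mathds{A},\mathds{B})=0$.

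For the reverse implication, I would extract from the hypothesis a sequence $\varepsilon_n\downarrow 0$ and $\varepsilon_n$-covariant tunnels $\tau_n=(\D_n,\Lip_n,\pi_1^n,\pi_2^n,\varsigma_1^n,\varsigma_2^n)$ with $\tunnelmagnitude{\tau_n}{\varepsilon_n}\leq\varepsilon_n$. The extents of the underlying tunnels are then at most $\varepsilon_n\to 0$, so $\dpropinquity{\mathcal{T}}((\A,\Lip_\A),(\B,\Lip_\B))=0$, and the metric property of the dual propinquity recalled above furnishes a full quantum isometry $\pi$. Independently, the pairs $(\varsigma_1^n,\varsigma_2^n)$ witness $\Upsilon((G,\delta_G),(H,\delta_H))=0$, so Theorem~\ref{upsilon-metric-thm} furnishes an isometric isomorphism $\varsigma$. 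The real content is to produce $\pi$ and $\varsigma$ that are \emph{compatible}, i.e.\ that satisfy $\pi\circ\alpha^g=\beta^{\varsigma(g)}\circ\pi$.

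The coordination of these two limits is where I expect the main obstacle to lie. Rather than invoking the two metric theorems as black boxes and hoping their outputs align, I would re-run both extractions along a single subsequence. On the monoid side, restricting the $\varsigma_1^n$ to any fixed ball $G[R]$ and using compactness of balls in $\delta_G$ yields, by Arzel\`a--Ascoli, uniform subconvergence to $\varsigma$; on the C*-algebra side, the standard proof that the dual propinquity separates points builds $\pi$ as a limit read off the tunnels, exploiting that the Lip-balls $\{a\in\sa{\A}:\Lip_\A(a)\leq 1\}$ are totally bounded modulo scalars, and I would take this limit along the same subsequence. The bound $\tunnelreach{\tau_n}{\varepsilon_n}\leq\varepsilon_n$ then supplies, for each $\varphi\in\StateSpace(\A)$, states $\psi_n\in\StateSpace(\B)$ making $\Kantorovich{\Lip_n}(\varphi\circ\alpha^g\circ\pi_1^n,\psi_n\circ\beta^{\varsigma_1^n(g)}\circ\pi_2^n)$ uniformly small over $g\in G[\tfrac{1}{\varepsilon_n}]$; as these balls exhaust $G$, I would fix $g$, pass to a weak-* limit $\psi$ of the $\psi_n$ by compactness of $\StateSpace(\B)$, and match limits. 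Using strong continuity of $\beta$ and the local boundedness of $g\mapsto\dil{\beta^g}$ to control the $g$-dependence uniformly in $n$, the vanishing reach forces, in the limit and under the state-space identification $\psi\mapsto\psi\circ\pi$ afforded by the isometry, the equality $\varphi\circ\pi^{-1}\circ\beta^{\varsigma(g)}\circ\pi=\varphi\circ\alpha^g$; letting $\varphi$ range over a separating family of states then yields the intertwining relation $\pi\circ\alpha^g=\beta^{\varsigma(g)}\circ\pi$. The delicate points are the interchange of the $n$-limit and the $g$-limit, for which the local boundedness hypothesis is precisely designed, and arranging the choice of $\psi_n$ to converge coherently as $\varphi$ varies.
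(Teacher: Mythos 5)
This survey states the theorem with a citation to \cite{Latremoliere18b} but reproduces no proof, so I am measuring your proposal against the strategy of the cited argument; your architecture is the same one. Your forward direction is complete and correct: axiom (3) of Definition (\ref{appropriate-def}) admits the tunnel $(\A,\Lip_\A,\mathrm{id}_\A,\pi,\varsigma,\varsigma^{-1})$, an exact isometric isomorphism pair lies in $\UIso{\varepsilon}{(G,\delta_G)}{(H,\delta_H)}{\frac{1}{\varepsilon}}$ for every $\varepsilon>0$, and your computation that both the extent and the $\varepsilon$-reach vanish is right. For the converse, you correctly locate the crux: $\pi$ and $\varsigma$ must be produced by a \emph{single} coordinated extraction, not by citing the two coincidence theorems as black boxes, and the reach bound is what forces the intertwining. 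Your limit-matching (contractivity of the positive unital maps $\beta^h$ on $\sa{\B}$, strong continuity of $\beta$, weak-$*$ cluster points of $(\psi_n)_{n\in\N}$, then separation by states) does close the argument; your worry about choosing the $\psi_n$ ``coherently as $\varphi$ varies'' is actually moot, since the case $g=e$ forces every weak-$*$ cluster point to equal $\varphi\circ\pi^{-1}$, and the final relation $\pi\circ\alpha^g=\beta^{\varsigma(g)}\circ\pi$ no longer mentions $\psi$.

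Two steps are too loose as written. First, the monoid extraction: classical Arzel\`a--Ascoli does not apply to the maps $\varsigma_1^n$, because an almost isometric isomorphism in the sense of Definition (\ref{almost-iso-def}) need not be continuous, so equicontinuity is not even meaningful. The working mechanism---which is the content of the proof of Theorem (\ref{upsilon-metric-thm})(4)---is a Gromov-type diagonal argument: the defining inequality with $g'=e$, $h=e_H$ shows $\varsigma_1^n$ maps $G[r]$ into the compact ball $H[r+\varepsilon_n]$ (compactness is needed in the \emph{codomain}, not in $\delta_G$ as you wrote), one extracts pointwise convergence on a countable dense subset of each ball, and then one must separately verify that the limit is \emph{multiplicative}: this is not a formal consequence of pointwise convergence, since only left translations are isometries; it uses the product form of the $\UIso{\varepsilon}{}{}{}$ condition, which controls $\delta_H(\varsigma_1^n(g)\varsigma_1^n(g'),h)$ directly and yields $\varsigma_1^n(g)\varsigma_1^n(g')\rightarrow\varsigma(gg')$, combined with continuity of the multiplication. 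Second, ``matching limits'': the reach bound compares two states of $\D_n$, and $\D_n$ varies with $n$, so the bound cannot be passed to a limit as it stands---it must be tested against lifts. Concretely, for $a\in\dom{\Lip_\A}$ with $\Lip_\A(a)\leq 1$, use that $\pi_1^n$ is a quantum isometry to produce $d_n\in\dom{\Lip_{\D_n}}$ with $\pi_1^n(d_n)=a$ and $\Lip_{\D_n}(d_n)\leq 1+\delta_n$, and set $b_n=\pi_2^n(d_n)$; then the reach bound gives $|\varphi(\alpha^g(a))-\psi_n(\beta^{\varsigma_1^n(g)}(b_n))|\leq(1+\delta_n)\varepsilon_n$ for $g\in G\left[\frac{1}{\varepsilon_n}\right]$, while the target-set convergence underlying the construction of $\pi$ gives $\norm{b_n-\pi(a)}{\B}\rightarrow 0$ along the chosen subsequence. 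This lifting step is the hinge between the tunnel data and the fixed algebras $\A$ and $\B$; without it the weak-$*$ limit argument has nothing to act on. With these two repairs, your outline is a correct proof and coincides with the cited one.
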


We now wish to explore the issue of completeness of the covariant propinquity. The important observation in the following theorem is if we start from a convergent sequence of {$F$-\qcms s} for the dual propinquity, and some converging sequence of proper monoids for $\Upsilon$, and if the monoids act on the {\qcms s} entry-wise, then a simple equicontinuity condition, expressed using the notion of dilation for Lipschitz morphisms, is all that is required to get a subsequence of the sequence of Lipschitz dynamical systems thus constructed to converge for the covariant propinquity. In other words, the dual propinquity wants to remember symmetries, as long as they do not degenerate. This idea is captured in its full power in our work in \cite{Latremoliere17c}, which establishes a very general result regarding semigroupoid actions. The following theorem is a consequence of \cite{Latremoliere17c} and captures this idea formally.

\begin{theorem}[{\cite{Latremoliere18c}}]\label{completeness-thm}
  Let $(\A,\Lip)$ be an {$F$-{\qcms}} and let $(G,\delta)$ be a proper monoid. Let $(\A_n,\Lip_n,G_n,\delta_n,\alpha_n)_{n\in\N}$ be a sequence of Lipschitz dynamical systems and let $D : [0,\infty) \rightarrow [0,\infty)$ be a locally bounded function such that:
  \begin{enumerate}
    \item for all $n\in\N$ and $g\in G_n$, we have $\dil{\alpha_n^g} \leq D(\delta_n(e_n,g))$,
    \item $\lim_{n\rightarrow\infty} \Upsilon((G_n,\delta_n),(G,\delta)) = 0$ ,
    \item $\lim_{n\rightarrow\infty} \covpropinquity{}((\A_n,\Lip_n),(\A,\Lip)) = 0$,
    \item for all $\varepsilon>0$, there exists $\omega > 0$ and $N\in\N$ such that if $n\geq N$ and if $g,h \in G_n$ with $\delta_n(g,h) < \omega$, then $\KantorovichDist{\Lip_n}{\alpha_n^g}{\alpha_n^h}{} < \varepsilon$.
  \end{enumerate}
  Then there exists a strictly increasing function $j : \N \rightarrow \N$ and a Lipschitz dynamical system $(\A,\Lip,G,\delta,\alpha)$ such that:
  \begin{equation*}
    \covpropinquity{}((\A_{j(n)},\Lip_{j(n)},G_{j(n)},\delta_{j(n)},\alpha_{j(n)}),(\A,\Lip,G,\delta,\alpha)) \xrightarrow{n\rightarrow\infty} 0 \text{.}
  \end{equation*}

Moreover:
\begin{itemize}
  \item if for all $n\in\N$, the map $\alpha_n$ is a *-endomorphism, then $\alpha$ is also a *-endomorphism,
  \item if for all $n\in\N$, the monoid $G_n$ is a proper group, the map $\alpha_n$ is a full quantum isometry, then $\alpha$ is also a full quantum isometry,
  \item if for all $n\in\N$, the monoid $G_n$ is a compact group, and if the action $\alpha_n$ is ergodic, then $\alpha$ is ergodic as well.
\end{itemize}
\end{theorem}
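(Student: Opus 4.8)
The plan is to read this as a noncommutative Arzel\`a--Ascoli theorem. Hypotheses (2) and (3) already supply, for each $n$, an $F$-tunnel $\upsilon_n = (\D_n,\Lip_{\D_n},\pi_{1,n},\pi_{2,n})$ from $(\A_n,\Lip_n)$ to $(\A,\Lip)$ whose extent tends to $0$, together with almost isometric isomorphisms $(\varsigma_{1,n},\varsigma_{2,n}) \in \UIso{\varepsilon_n}{(G_n,\delta_n)}{(G,\delta)}{1/\varepsilon_n}$ with $\varepsilon_n\to 0$, where $\varsigma_{1,n} : G_n[1/\varepsilon_n]\rightarrow G$ and $\varsigma_{2,n} : G[1/\varepsilon_n]\rightarrow G_n$. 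From these one can already assemble covariant tunnels, so the genuinely new object to be produced is the limiting action $\alpha$ of $G$ on $\A$. Two sources of compactness make this possible: first, the properness of $(G,\delta)$, so that every closed ball $G[r]$ is compact and suprema over balls reduce to finite nets via the equicontinuity of Hypothesis (4); second, the fact that in a {\qcms} the sets $\{a\in\sa{\A} : \Lip(a)\leq k,\ \norm{a}{\A}\leq M\}$ are totally bounded, which is exactly what powers the theorem that $\mathrm{mkD}_{\Lip}$ metrizes the topology of pointwise-norm convergence on norm-bounded families of unital positive maps.

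First I would construct $\alpha^g$ for $g$ in a fixed countable dense subset $G_0$ of $G$. Fixing $g\in G_0$, I transport the maps $\alpha_n^{\varsigma_{2,n}(g)}$ on $\A_n$ to maps $\A\rightarrow\A$ through $\upsilon_n$: given $a\in\dom{\Lip}$ self-adjoint with $\Lip(a)\leq 1$, I lift $a$ through the quantum isometry $\pi_{2,n}$ to $\D_n$ with nearly the same $\Lip_{\D_n}$-seminorm, push it to $\A_n$ through $\pi_{1,n}$, apply $\alpha_n^{\varsigma_{2,n}(g)}$, and transport back the same way. Hypothesis (1) bounds the dilations uniformly on each ball by $D(\cdot)$, and unital positive maps are norm-contractive, so the transported images lie, up to an error controlled by $\tunnelextent{\upsilon_n}\to 0$, in a fixed totally bounded subset of $\A$. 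A diagonal extraction over $G_0$ and over a countable $\Lip$-dense set of test elements yields a strictly increasing $j$ and, for each $g\in G_0$, a pointwise-norm limit $\alpha^g$; linearity, positivity, unitality, and (by lower semicontinuity of $\Lip$) the bound $\Lip\circ\alpha^g\leq D(\delta(e,g))\Lip$ pass to the limit. I then extend $g\mapsto\alpha^g$ to all of $G$: Hypothesis (4) makes $g\mapsto\alpha_n^g$ equicontinuous for $\mathrm{mkD}_{\Lip_n}$ uniformly in $n$, hence $g\mapsto\alpha^g$ uniformly continuous on balls and extendable by density, giving strong continuity and local boundedness of $\dil{\alpha^g}$, so that $(\A,\Lip,G,\delta,\alpha)$ is a Lipschitz dynamical system. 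The identity $\alpha^{gh}=\alpha^g\circ\alpha^h$ follows from $\alpha_n^{\varsigma_{2,n}(g)}\circ\alpha_n^{\varsigma_{2,n}(h)}=\alpha_n^{\varsigma_{2,n}(g)\varsigma_{2,n}(h)}$ together with the defining inequality of Definition (\ref{almost-iso-def}), which, combined with $\varsigma_{1,n}\circ\varsigma_{2,n}\approx\mathrm{id}$, makes $\varsigma_{2,n}(g)\varsigma_{2,n}(h)$ and $\varsigma_{2,n}(gh)$ asymptotically coincide in $\delta_n$, the discrepancy being absorbed through equicontinuity before passing to the limit.

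Next I would verify convergence for the covariant propinquity. Given a target $\varepsilon$, I form the covariant tunnels $\tau_n=(\upsilon_{j(n)},\varsigma_{1,j(n)},\varsigma_{2,j(n)})$ and estimate $\tunnelmagnitude{\tau_n}{\varepsilon}=\max\{\tunnelextent{\upsilon_{j(n)}},\tunnelreach{\tau_n}{\varepsilon}\}$. The extent tends to $0$ by Hypothesis (3). For the $\varepsilon$-reach of Definition (\ref{reach-def}), I use compactness of $G[1/\varepsilon]$ and $G_{j(n)}[1/\varepsilon]$ to replace the supremum over the ball by a supremum over a finite $\delta$-net, on which equicontinuity makes the comparison uniform; on each net point the quantity $\Kantorovich{\Lip_{\D_{j(n)}}}(\varphi\circ\alpha_j^g\circ\pi_j,\psi\circ\alpha_k^{\varsigma_j(g)}\circ\pi_k)$ is small precisely because, in either branch $\{j,k\}=\{1,2\}$, the relevant map $\alpha^{\bullet}$ on $\A$ was built as the transported limit along $\upsilon_{j(n)}$ of the corresponding map $\alpha_{j(n)}^{\bullet}$ on $\A_{j(n)}$. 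Thus the reach tends to $0$, the magnitude tends to $0$, and $\covpropinquity{}$ converges to $0$ along $j$.

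Finally, the persistence statements follow because multiplicativity and $\ast$-compatibility are closed under pointwise-norm limits on norm-bounded sets, so a pointwise limit of $\ast$-endomorphisms is a $\ast$-endomorphism; in the group case one builds $\alpha^{g^{-1}}$ simultaneously, so the homomorphism property gives bijectivity, while the two inequalities $\Lip\circ\alpha^g\leq\Lip$ and $\Lip\circ\alpha^{g^{-1}}\leq\Lip$ force $\Lip\circ\alpha^g=\Lip$, i.e.\ a full quantum isometry. I expect the main obstacles to be twofold. The technical heart is the construction of honest limit maps through the varying pivot algebras $\D_n$ and the proof of the homomorphism identity, where the failure of the $\varsigma_{j,n}$ to be exact morphisms must be reconciled with equicontinuity without losing uniformity on balls. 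The most delicate add-on is the preservation of ergodicity for compact groups: one must show that an $\alpha$-invariant self-adjoint element is scalar, which requires transporting invariant elements back to each $\A_n$, invoking the ergodicity of $\alpha_n$ to place them near $\C\unit_{\A_n}$, and controlling Haar averages under $\Upsilon$-convergence of the compact groups $G_n$ to $G$.
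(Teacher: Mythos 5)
The survey itself contains no proof of Theorem (\ref{completeness-thm}): it cites \cite{Latremoliere18c}, and describes the argument only as a consequence of the general semigroupoid-action results of \cite{Latremoliere17c}, namely ``a sort of Arz{\'e}la-Ascoli theorem'' in which equicontinuity upgrades dual-propinquity convergence to convergence of the actions. Your proposal reconstructs exactly that strategy --- transporting the maps $\alpha_n^{\varsigma_{2,n}(g)}$ through tunnels of vanishing extent, diagonal extraction over countable dense subsets of $G$ and of the Lipschitz ball using total boundedness of sets of bounded L-seminorm, extension to all of $G$ via the equicontinuity hypothesis (4), and passage of the algebraic and isometric properties to the limit --- with each hypothesis playing the same role it plays in the cited work, so it matches the paper's (cited) approach.
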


Before we apply Theorem (\ref{completeness-thm}) to finding sufficient conditions on Cauchy sequences of Lipschitz dynamical systems, we observe some of its more immediate important consequences.

It is usually very difficult to determine the closure, for the dual propinquity, of a given set of {\qcms s}. For instance, note that quantum tori are all members of the closure of all {\Lqcms s} over finite dimensional C*-algebras by \cite{Latremoliere13c}. In that same closure, one also finds all classical compact metric spaces, and noncommutative solenoids. Relaxing the Leibniz inequality to work within some also quasi-Leibniz class of {\qcms s} (specifically, the so-called $(2,1)$-quasi-Leibniz {\qcms s} of \cite{Latremoliere15}), we then find that the closure of finite dimensional {\Qqcms{(2,1)}s} for the dual propinquity contains all AF algebras \cite{Latremoliere15d}, and all {\Lqcms s} whose underlying C*-algebras are nuclear quasi-diagonal by \cite{Latremoliere15}. One technique to compute closures is to invoke a compactness result. For instance, various classes of AF algebras are shown to be compact for the dual propinquity in \cite{Latremoliere15d}. Theorem (\ref{completeness-thm}) offers another technique.

As an example \cite{Latremoliere17c}, let $\mathcal{M}$ be the class of all finite dimensional {\Lqcms s} carrying an ergodic action of $SU(2)$ by quantum isometries. Then by Theorem (\ref{completeness-thm}), any limit of any convergent sequence in $\mathcal{M}$ must also carry an ergodic action of $SU(2)$, so it must be of type I---in fact, it must be a bundle of matrix algebras over a homogeneous space for $SU(2)$. This is a very nontrivial observation showing the power of Theorem (\ref{completeness-thm}).

To study the completeness of the covariant propinquity, it is helpful to first have a general idea of what conditions on Cauchy sequences for $\Upsilon$ make them convergent. The condition we exhibit is a form of equicontinuity for right translations. To formulate this condition, we write:
\begin{equation*}
  \prod_{n\in\N} G_n = \left\{ (g_n)_{n\in\N} : \exists M > 0 \quad \forall n \in \N \quad g_n \in G_n[M] \right\} \text{.}
\end{equation*}

Our equicontinuity condition will be expressed using the following notion of regularity.

\begin{definition}[{\cite{Latremoliere18c}}]\label{regular-sequence-def}
  Let $(G_n,\delta_n)_{n\in\N}$ be a sequence of proper monoids. The set of \emph{regular sequences} $\mathcal{R}((G_n,\delta_n)_{n\in\N})$ is:
      \begin{equation*}
        \left\{ (g_n)_{n\in\N} \in \prod_{n\in\N} G_n \middle\vert 
          \begin{array}{l}
            \forall\varepsilon > 0\quad\exists \omega>0 \quad \exists N \in \N \\
            \forall n \geq N \quad \forall h,k \in G_n \\
            \delta_n(h,k)<\omega \implies \delta_n(h g_n, k g_n) < \varepsilon \text{.}
          \end{array}  \right\} \text{.}
      \end{equation*}
\end{definition}

We can now phrase a sufficient condition for a Cauchy sequence of proper monoids to converge for $\Upsilon$.

\begin{theorem}[{\cite{Latremoliere18c}}]\label{Upsilon-completeness-thm}
  Let $(G_n,\delta_n)_{n\in\N}$ be a sequence such that for all $n\in\N$, there exist $\varepsilon_n > 0$ and
\begin{equation*}
(\varsigma_n,\varkappa_n) \in \UIso{\varepsilon_n}{(G_n,\delta_n)}{(G_{n+1},\delta_{n+1})}{\frac{1}{\varepsilon_n}}
\end{equation*}
such that:
  \begin{enumerate}
    \item $\sum_{n=0}^\infty \varepsilon_n < \infty$,
    \item for all $N\in\N$ and $g \in G_N\left[ \frac{1}{\sum_{n=N}^\infty \varepsilon_n} \right]$:
\begin{equation*}
  \varpi_N(g) = \left(\begin{cases}
      g_n = e_n \text{ if $n < N$,}\\
      g_n = g    \text{ if $n = N$,}\\
      g_n = \varsigma_{n-1}(g_{n-1}) \text{ if $n>N$}
      \end{cases}\right)_{n\in\N} \in \mathcal{R}((G_n,\delta_n)_{n\in\N})\text{.}
  \end{equation*}
  \end{enumerate}
Then there exists a proper monoid $(G,\delta)$ such that $\lim_{n\rightarrow\infty} \Upsilon((G_n,\delta_n),(G,\delta)) = 0$.
\end{theorem}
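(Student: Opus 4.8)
The plan is to realize $(G,\delta)$ as a space of equivalence classes of coherent sequences in $\prod_{n\in\N}G_n$, with the monoid operation induced coordinatewise and the metric obtained as a coordinatewise limit. The two hypotheses then play complementary roles: the summability of $(\varepsilon_n)$ controls the metric, while the regularity of the sequences $\varpi_N(g)$ controls the multiplication. First I would extract the elementary consequences of Definition (\ref{almost-iso-def}) by specializing the defining inequality of $(\varsigma_n,\varkappa_n)$. Taking $g'=e_n$ shows that $\varsigma_n$ and $\varkappa_n$ are $\varepsilon_n$-approximate mutual inverses, $\delta_n(\varkappa_n\circ\varsigma_n(g),g)\le\varepsilon_n$ on the relevant ball; substituting $h=\varsigma_n(g')$ shows $\varsigma_n$ is a $2\varepsilon_n$-approximate isometry, and substituting $h=\varsigma_n(gg')$ shows $\varsigma_n$ is a $2\varepsilon_n$-approximate homomorphism, $\delta_{n+1}(\varsigma_n(g)\varsigma_n(g'),\varsigma_n(gg'))\le 2\varepsilon_n$. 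Since $\sum_n\varepsilon_n<\infty$ forces $\varepsilon_n\to 0$, the domain radii $\tfrac{1}{\varepsilon_n}$ grow without bound, and the approximate norm-preservation $|\delta_{n+1}(\varsigma_n(g),e_{n+1})-\delta_n(g,e_n)|\le\varepsilon_n$ keeps a bounded element inside balls of uniformly controlled radius, so all these substitutions stay legitimate for $n$ large.

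Next I would build $G$ from the canonical sequences $\varpi_N(g)$ of hypothesis (2). For two such sequences that are eventually exactly coherent (i.e. $g_{n+1}=\varsigma_n(g_n)$ for large $n$), the approximate-isometry estimate telescopes to $|\delta_{n+1}(g_{n+1},h_{n+1})-\delta_n(g_n,h_n)|\le 2\varepsilon_n$, so by summability $(\delta_n(g_n,h_n))_n$ is Cauchy and I may set $\delta([(g_n)],[(h_n)])=\lim_{n\to\infty}\delta_n(g_n,h_n)$. I take $G$ to be the completion of the set of these classes for the resulting metric; left-invariance of each $\delta_n$ passes to the limit, so $\delta$ is left-invariant. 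Properness I would obtain as completeness together with total boundedness of balls: a closed ball $G[r]$ is covered, up to an error of size $\sum_{m\ge n}\varepsilon_m$, by the image of the compact ball $G_n[r+1]$ under $g\mapsto[\varpi_n(g)]$, and taking $n$ large makes this error arbitrarily small, producing finite nets.

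The heart of the argument, and the step I expect to be the main obstacle, is transporting the monoid structure. I would set $[(g_n)]\cdot[(h_n)]=[(g_nh_n)]$ and must check that the product sequence is coherent, regular, and that its class is independent of representatives. Coherence is immediate from the approximate-homomorphism estimate, since $g_{n+1}h_{n+1}=\varsigma_n(g_n)\varsigma_n(h_n)$ lies within $2\varepsilon_n$ of $\varsigma_n(g_nh_n)$. The delicate point is representative-independence: if $\delta_n(g_n,g_n')\to 0$ and $\delta_n(h_n,h_n')\to 0$, then $\delta_n(g_nh_n,g_n'h_n')\le\delta_n(g_nh_n,g_n'h_n)+\delta_n(g_n'h_n,g_n'h_n')$, where the second summand equals $\delta_n(h_n,h_n')$ by left-invariance and vanishes, but the first summand is a \emph{right}-translation of $g_n,g_n'$ by $h_n$ and is not controlled by left-invariance. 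This is exactly where hypothesis (2) enters: regularity of $(h_n)=\varpi_M(h)$ is precisely the equicontinuity of right-translation by $h_n$ for large $n$, which forces $\delta_n(g_nh_n,g_n'h_n)\to 0$. Thus regularity is the ingredient that makes multiplication descend to equivalence classes and extend from the dense set of classes to the completion (left-invariance handling the left variable, regularity the right one); associativity and neutrality of $[(e_n)]$ then hold coordinatewise, and one verifies that products of regular sequences are regular so that $G$ is closed under multiplication. Note that the definition of a proper monoid asks only for a monoid carrying a left-invariant proper metric, so no separate continuity of multiplication is required beyond what is used to extend the operation.

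Finally I would verify $\lim_{n\to\infty}\Upsilon((G_n,\delta_n),(G,\delta))=0$. For each large $n$ I take $\iota_n:g\mapsto[\varpi_n(g)]$ from a ball of $G_n$ into $G$ and $\pi_n:[(x_m)]\mapsto x_n$ from a ball of $G$ into $G_n$, and check the inequality of Definition (\ref{almost-iso-def}) with tolerance $\eta_n$: unwinding the coordinatewise limit, every discrepancy between $\delta$ and $\delta_n$, whether on products or in pairing $\iota_n$ against $\pi_n$, telescopes to the tail $\sum_{m\ge n}\varepsilon_m$, while the admissible radius $\tfrac{1}{\eta_n}$ may be sent to infinity. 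Choosing $\eta_n$ to be a fixed multiple of this tail, so that $\eta_n\to 0$ and $\tfrac{1}{\eta_n}\to\infty$, exhibits an element of $\UIso{\eta_n}{(G_n,\delta_n)}{(G,\delta)}{1/\eta_n}$, whence $\Upsilon((G_n,\delta_n),(G,\delta))\le\eta_n\to 0$ by Definition (\ref{group-GH-def}). Together with Theorem (\ref{upsilon-metric-thm}), this identifies $(G,\delta)$ as the desired proper-monoid limit.
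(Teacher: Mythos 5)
Your proposal is correct and follows essentially the same route as the proof this survey cites from \cite{Latremoliere18c}: realize the limit as equivalence classes of coherent/regular sequences in $\prod_{n\in\N}G_n$, define the metric as the coordinatewise limit (controlled by the telescoping estimate $|\delta_{n+1}(g_{n+1},h_{n+1})-\delta_n(g_n,h_n)|\leq 2\varepsilon_n$ and summability), define multiplication coordinatewise with hypothesis (2) supplying exactly the equicontinuity of right translations needed for representative-independence, and exhibit almost isometric isomorphisms with tolerance a multiple of the tail sums to conclude $\Upsilon$-convergence. You correctly identify the crux---left-invariance alone cannot control $\delta_n(g_nh_n,g_n'h_n)$, and regularity is what makes the product descend to classes and extend to the completion---which is precisely the role that notion plays in the original argument.
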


We note that if we work with proper monoids endowed with bi-invariant metrics, then our regularity condition is automatic since all sequences are regular in the sense of Definition (\ref{regular-sequence-def}). More generally, any reasonable form of uniform control of the Lipschitz constant of right translations can be used to prove that sequences are regular. In \cite{Latremoliere18c}, we show how to exploit this idea  to construct a complete version of $\Upsilon$ on a class of proper monoids including proper monoids with bi-invariant metrics as a proper subclass.

Putting together our compactness theorem and our sufficient condition for convergence of $\Upsilon$-Cauchy sequences of proper monoids, we get the following sufficient condition for convergence of Cauchy sequences in the covariant propinquity.

\begin{corollary}
  Let $F$ be permissible and continuous and let $D : [0,\infty)\rightarrow [0,\infty)$ be a locally bounded function. Let $(\A_n,\Lip_n,G_n,\delta_n,\alpha_n)_{n\in\N}$ be a sequence of Lipschitz dynamical $F$-systems and $(\varepsilon_n)_{n\in\N}$ a sequence of positive real numbers such that for all $n\in\N$, there exists $\varepsilon_n > 0$ and $(\varsigma_n,\varkappa_n) \in \UIso{\varepsilon_n}{G_n}{G_{n+1}}{\frac{1}{\varepsilon_n}}$ and:
  \begin{enumerate}
  \item $\sum_{n=0}^\infty \varepsilon_n < \infty$,
  \item for all $n\in\N$ and $g \in G_n$:
    \begin{equation*}
      \left(\begin{cases}
          g_n = e_n \text{ if $n < N$,}\\
          g_n = g    \text{ if $n = N$,}\\
          g_n = \varsigma_n(g_{n-1}) \text{ if $n>N$}
        \end{cases}\right)_{n\in\N} \in \mathcal{R}((G_n,\delta_n)_{n\in\N})\text{,}
    \end{equation*}
  \item $\forall n\in \N \quad \dpropinquity{}((\A_n,\Lip_n),(\A_{n+1},\Lip_{n+1})) < \varepsilon_n$,
    \item $\Lip_n\circ\alpha_n^{g} \leq D(\delta_n(e_n,g))\Lip_n$,
    \item for all $\varepsilon>0$, there exists $\omega > 0$ and $N\in\N$ such that if $n\geq N$ and if $g,h \in G_n$ with $\delta_n(g,h) < \omega$, then $\KantorovichDist{\Lip_n}{\alpha_n^g}{\alpha_n^h}{} < \varepsilon$.
  \end{enumerate}
Then there exists a Lipschitz dynamical $F$-system $(\A,\Lip_\A,G,\delta,\alpha)$ such that:
\begin{equation*}
  \lim_{n\rightarrow\infty} \covpropinquity{}((\A_n,\Lip_n,G_n,\delta_n,\alpha_n),(\A,\Lip_\A,G,\delta,\alpha)) = 0 \text{.}
\end{equation*}
  Moreover, if for all $n\in\N$, the action $\alpha_n$ is by *-endomorphisms (resp. full quantum isometries, when $G_n$ is a group for all $n\in\N$), then so also is the action $\alpha$.
\end{corollary}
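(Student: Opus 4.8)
The plan is to obtain the statement by routing each hypothesis into the three completeness results already at our disposal---Theorem~(\ref{Upsilon-completeness-thm}) for the monoids, the completeness of $\dpropinquity{F}$ for continuous $F$ for the {\qcms s}, and Theorem~(\ref{completeness-thm}) for the covariant structure---and then promoting the subsequential convergence that Theorem~(\ref{completeness-thm}) provides to convergence of the entire sequence. First I would apply Theorem~(\ref{Upsilon-completeness-thm}) to $(G_n,\delta_n)_{n\in\N}$: hypotheses (1) and (2) are exactly its assumptions, so it yields a proper monoid $(G,\delta)$ with $\lim_{n\to\infty}\Upsilon((G_n,\delta_n),(G,\delta))=0$. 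Next, hypothesis (3) and the triangle inequality give $\dpropinquity{F}((\A_m,\Lip_m),(\A_p,\Lip_p))\leq\sum_{n=m}^{p-1}\varepsilon_n$ for $p>m$; since $\sum_n\varepsilon_n<\infty$ by (1), the sequence $(\A_n,\Lip_n)_{n\in\N}$ is Cauchy for $\dpropinquity{F}$, and as $F$ is continuous $\dpropinquity{F}$ is complete, so there is an {$F$-\qcms} $(\A,\Lip)$ with $\lim_{n\to\infty}\dpropinquity{F}((\A_n,\Lip_n),(\A,\Lip))=0$.

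With the two limits in hand, I would verify the hypotheses of Theorem~(\ref{completeness-thm}): its dilation bound is hypothesis (4) read through $\dil{\alpha_n^g}\leq D(\delta_n(e_n,g))$, its $\Upsilon$-convergence and $\dpropinquity{}$-convergence are the two limits just constructed, and its equicontinuity condition is hypothesis (5) verbatim. Theorem~(\ref{completeness-thm}) then produces a strictly increasing $j:\N\to\N$ and a Lipschitz dynamical $F$-system $(\A,\Lip,G,\delta,\alpha)$---with underlying data the very $(\A,\Lip)$ and $(G,\delta)$ above---to which the subsequence indexed by $j$ converges for $\covpropinquity{}$. The two ``moreover'' clauses (preservation of $*$-endomorphisms and, over groups, of full quantum isometries) transfer directly from the corresponding clauses of Theorem~(\ref{completeness-thm}).

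It remains to upgrade this to convergence of the full sequence, and here I would use that $\covpropinquity{}$ is a pseudometric together with the subsequence criterion: a sequence converges to a point as soon as every subsequence has a further subsequence converging to that point. The hypotheses actually needed for Theorem~(\ref{completeness-thm})---convergence to the fixed limits $(\A,\Lip)$ and $(G,\delta)$, the dilation bound, and the tail equicontinuity condition (5)---are all inherited by an arbitrary subsequence, so each subsequence admits, by Theorem~(\ref{completeness-thm}), a further subsequence converging for $\covpropinquity{}$ to a Lipschitz dynamical system with the same underlying $(\A,\Lip)$ and $(G,\delta)$ but carrying an a priori different action $\alpha'$. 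The main obstacle is the uniqueness of this limiting action: to close the argument I must show $\alpha'=\alpha$ up to equivariant full quantum isometry. I would establish this by using the equicontinuity hypothesis (5), together with strong continuity, to identify $\alpha^g$ for each fixed $g\in G$ as the limit---transported across the tunnels and the almost-isometries---of $\alpha_n^{g_n}$ for any sequence $g_n\to g$, and to show that this limit does not depend on the extracted subsequence. It is precisely in proving that $\alpha$ is a genuine, subsequence-independent limit of the approximating actions that the equicontinuity condition does its essential work; once this is in place, every subsequence has a further subsequence with limit $(\A,\Lip,G,\delta,\alpha)$, and the whole sequence converges.
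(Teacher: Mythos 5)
Your first three steps coincide with the assembly the paper intends: Theorem~(\ref{Upsilon-completeness-thm}) applied to hypotheses (1)--(2) produces the limit monoid $(G,\delta)$; hypothesis (3) together with the completeness of $\dpropinquity{F}$ for continuous $F$ produces the limit space $(\A,\Lip)$; and Theorem~(\ref{completeness-thm}) then yields a subsequence converging for $\covpropinquity{}$. The genuine gap is your final step, and under the hypotheses as printed it cannot be repaired: nothing in (1)--(5) ties $\alpha_n$ to $\alpha_{n+1}$---hypothesis (3) constrains only the underlying quantum metric spaces, while (4) and (5) are conditions on each action separately---so the limit action genuinely depends on the extracted subsequence, and no equicontinuity argument can deliver the uniqueness you need. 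Concretely, let every $(\A_n,\Lip_n)$ be $C(\T)$ with its usual Lipschitz seminorm, every $(G_n,\delta_n)$ be $\Z/2\Z$ with the discrete metric, $\varsigma_n=\varkappa_n=\mathrm{id}$, $\varepsilon_n=2^{-n}$, and let $\alpha_n$ be the trivial action for $n$ even and, for $n$ odd, the order-two action generated by the *-automorphism of $C(\T)$ induced by the isometry $z\in\T\mapsto\overline{z}$. All of (1)--(5) hold: all dilations equal $1$, regularity and equicontinuity are trivial for a finite discrete group, and consecutive dual propinquities vanish. Yet the two systems are not equivariantly fully quantum isometric (an equivariant isomorphism would force the conjugation automorphism to equal the identity), so they sit at strictly positive $\covpropinquity{}$-distance; the even-indexed and odd-indexed subsequences then converge to non-isomorphic limits, and the full sequence cannot converge. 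So the step you flagged as ``the main obstacle'' is not merely difficult: it is false, as is the corollary when (3) is read literally.

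What the corollary is actually recording---this is the theorem of \cite{Latremoliere18c}, and it is what the paper's phrase ``convergence of Cauchy sequences in the covariant propinquity'' announces---has hypothesis (3) stated for the covariant propinquity of the full dynamical systems, that is $\covpropinquity{}(\mathds{A}_n,\mathds{A}_{n+1})<\varepsilon_n$; the printed $\dpropinquity{}$ is a notational slip (there is a symmetric slip in hypothesis (3) of Theorem~(\ref{completeness-thm}), where $\covpropinquity{}$ is applied to bare quantum metric spaces, and which you silently corrected to $\dpropinquity{}$ when you verified that theorem's hypotheses). With the corrected hypothesis your architecture closes at once: the magnitude of a covariant tunnel dominates the extent of its underlying tunnel, so covariant Cauchyness implies $\dpropinquity{F}$-Cauchyness and your step 2 is unchanged; Theorem~(\ref{completeness-thm}) supplies one $\covpropinquity{}$-convergent subsequence; and the standard pseudometric fact that a Cauchy sequence possessing a convergent subsequence converges to that subsequence's limit finishes the proof. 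The whole point of strengthening (3) is precisely to dispense with the subsequence-independence argument you were attempting.
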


We conclude this section with an example of an explicit covariant convergence. We note that Theorem (\ref{completeness-thm}) provides many examples of convergent subsequences for the covariant propinquity, arising from convergent sequences for the dual propinquity; however these convergent subsequences arise from an implicit construction.

As may be expected, it is helpful to return to the notion of a bridge when working with the covariant propinquity. Let there be given two Lipschitz C*-dynamical systems $(\A,\Lip_\A,G,\delta_G,\alpha)$ and $(\B,\Lip_\B,H,\delta_H,\beta)$, and let us start with a bridge $\gamma = (\D,x,\pi_\A,\pi_\B)$ from $\A$ to $\B$. Let there also be given some pair
\begin{equation*}
  (\varsigma,\varkappa) \in \UIso{\varepsilon}{(G,\delta_G)}{(H,\delta_H)}{\frac{1}{\varepsilon}} \text{.}
\end{equation*}
Now, there is a natural means to define a new seminorm from this data, which incorporates the actions into the bridge seminorm, by setting, for all $(a,b) \in \A\oplus\B$:
\begin{multline*}
  \bridgenorm{\gamma,\varsigma,\varkappa}{a,b} = \max\bigg\{ \bridgenorm{\gamma}{\alpha^g(a),\beta^{\varsigma(g)}(b)}, \bridgenorm{\gamma}{\alpha^{\varkappa(h)}(a),\beta^h(b)} : \\
 g \in G\left[\frac{1}{\varepsilon}\right], h \in H\left[\frac{1}{\varepsilon}\right] \bigg\} \text{.}
\end{multline*}

We can then adjust the notion of the reach of a bridge using our modified bridge seminorm to prove an analogue \cite[Proposition 4.5]{Latremoliere18b} of Theorem (\ref{tunnel-from-bridge-thm}). Using such a technique, we can prove:

\begin{theorem}[{\cite{Latremoliere18b}}]
  Let $\ell$ be a continuous length function on $\T^d$. Let $(G_n)_{n\in\N}$ be a sequence of closed subgroups of $\T^d$ converging to $\T^d$ for the Hausdorff distance induced by $\ell$ on the closed subsets of $\T^d$.

  Let $(\sigma_n)_{n\in\N}$ be a sequence of multipliers of $\Z^d$ converging pointwise to $\sigma$ and such that $\sigma_n$ is the lift of a multiplier of the Pontryagin dual $\widehat{G_n}$ of $G_n$.

  For all $n\in\N$, denote by $\alpha_n$ the dual action of $G_n$ on $C^\ast(\widehat{G_n},\sigma_n)$ and by $\alpha$ the dual action of $\T^d$ on the quantum torus $C^\ast(\Z^d,\sigma)$, and consider the L-seminorms $\Lip_n$ and $\Lip$ induced by Example (\ref{group-lip-ex}).

  Denote by $\covpropinquity{}$ the covariant propinquity for the class of all Leibniz Lipschitz C*-dynamical systems, and identify, for the sake of simplicity, the distance induced by $\ell$ and $\ell$ itself. Then
  \begin{equation*}
    \lim_{n\rightarrow\infty} \covpropinquity{}\left( \left(C^\ast(\widehat{G_n},\sigma_n),\Lip_n,G_n,\ell,\alpha_n\right), \left(C^\ast(\Z^d,\sigma),\Lip,\T^d,\ell,\alpha \right) \right) = 0 \text{.}
  \end{equation*}
\end{theorem}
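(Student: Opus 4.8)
The plan is to combine the classical convergence of fuzzy tori to the quantum torus (the non-covariant Example already quoted in the excerpt) with a careful accounting of the group actions, using the bridge-based machinery sketched just before the statement. The target is to show that for each $\varepsilon>0$ and all large $n$, one can exhibit an $\varepsilon$-covariant tunnel $\tau_n$ from $\left(C^\ast(\widehat{G_n},\sigma_n),\Lip_n,G_n,\ell,\alpha_n\right)$ to $\left(C^\ast(\Z^d,\sigma),\Lip,\T^d,\ell,\alpha\right)$ whose $\varepsilon$-magnitude is at most $\varepsilon$. Since magnitude is the maximum of extent and $\varepsilon$-reach, and extent is controlled by the length of the underlying bridge via Theorem (\ref{tunnel-from-bridge-thm}), the bulk of the work is to produce bridges whose length vanishes and whose modified reach (incorporating the actions) also vanishes.

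First I would fix the geometric ingredient on the group side. Because $G_n \subseteq \T^d$ with $G_n \to \T^d$ in the Hausdorff distance induced by $\ell$, the inclusion $G_n \hookrightarrow \T^d$ together with a nearest-point map $\T^d[1/\varepsilon] \to G_n$ furnishes, for suitable $\varepsilon_n \to 0$, a pair $(\varsigma_n,\varkappa_n) \in \UIso{\varepsilon_n}{(G_n,\ell)}{(\T^d,\ell)}{\frac{1}{\varepsilon_n}}$; here $\ell$ is bi-invariant on the abelian group $\T^d$, so the almost-isometry estimate of Definition (\ref{almost-iso-def}) reduces to a Hausdorff-closeness statement and is essentially free. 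This gives $\Upsilon((G_n,\ell),(\T^d,\ell)) \to 0$ and simultaneously supplies the almost-isometric isomorphisms needed to build covariant tunnels. Next I would import the bridge $\gamma_n = (\D_n, x_n, \pi_n, \pi)$ already used to prove the \emph{non-covariant} convergence in the first Example of Section 2, for which $\bridgelength{\gamma_n}{\Lip_n,\Lip} \to 0$; the standard choice represents both $C^\ast(\widehat{G_n},\sigma_n)$ and $C^\ast(\Z^d,\sigma)$ on a common space built from the regular representation, with pivot $x_n$ close to the identity, using the pointwise convergence $\sigma_n \to \sigma$ and the hypothesis that $\sigma_n$ lifts a multiplier of $\widehat{G_n}$.

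The decisive step is to verify that the \emph{modified} bridge seminorm $\bridgenorm{\gamma_n,\varsigma_n,\varkappa_n}{\cdot,\cdot}$ still yields a small reach, so that the analogue of Theorem (\ref{tunnel-from-bridge-thm}) quoted as \cite[Proposition 4.5]{Latremoliere18b} produces a covariant tunnel of small magnitude. This is where the dual actions must interact well with the bridge: I would exploit that the dual action $\alpha_n^g$ is implemented by pointwise multiplication by the character $\langle g, \cdot\rangle$ on $C^\ast(\widehat{G_n},\sigma_n)$, and that $\varsigma_n(g) \approx g$ under the embedding $G_n \hookrightarrow \T^d$, so that $\pi_n(\alpha_n^g(a)) x_n$ and $x_n \pi(\alpha^{\varsigma_n(g)}(b))$ differ by the same order as in the unmodified bridge norm, uniformly over $g \in G_n[1/\varepsilon]$. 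Concretely, the expression $\norm{\pi_n(\alpha_n^g(a)) x_n - x_n \pi(\beta^{\varsigma_n(g)}(b))}{\D_n}$ is estimated by transporting $\alpha_n^g$ through $\pi_n$ and comparing with $\alpha^{\varsigma_n(g)}$ through $\pi$; the compactness of $\T^d$ and strong continuity let the supremum over $g$ be controlled by the unmodified reach plus a term measuring $\sup_{g}\norm{\alpha^{\varsigma_n(g)} - (\text{transported }\alpha_n^g)}{}$, which tends to $0$ as $n\to\infty$ and $\varepsilon_n\to 0$.

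\textbf{The hard part} will be controlling this action-comparison term uniformly in the group parameter $g$ ranging over the (growing) ball $G_n[1/\varepsilon]$, rather than pointwise: one needs an equicontinuity estimate showing that the map $g \mapsto \alpha_n^g$ is uniformly close to $g \mapsto \alpha^{\varsigma_n(g)}$ in the $\KantorovichDist{\Lip_n}{}{}$ sense on the relevant ball, and that the characters defining the dual actions converge uniformly on the Lipschitz balls. Since the Lipschitz balls are totally bounded (by the quantum metric axiom) and the characters $g \mapsto \langle g,\cdot\rangle$ are equicontinuous in $g$, I expect this to follow, but it is the step where the permissibility/continuity of $F$ and the precise definition of the $\varepsilon$-reach in Definition (\ref{reach-def}) must be used carefully to turn pointwise-in-$g$ closeness into the uniform bound demanded by the magnitude. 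Once the modified reach and the extent are both shown to be $O(\varepsilon_n) + o(1)$, choosing $\varepsilon$ appropriately gives $\tunnelmagnitude{\tau_n}{\varepsilon} \leq \varepsilon$ for large $n$, and hence $\covpropinquity{}(\cdots) \to 0$ by Definition (\ref{covariant-propinquity-def}).
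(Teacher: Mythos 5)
Your proposal follows essentially the same route as the paper: the survey's own argument is precisely to take the bridge underlying the non-covariant convergence of fuzzy tori to quantum tori, pair it with the almost isometric isomorphisms $(\varsigma_n,\varkappa_n)$ supplied by the Hausdorff convergence of the closed subgroups $G_n\subseteq\T^d$ (which also gives $\Upsilon((G_n,\ell),(\T^d,\ell))\to 0$), and control the magnitude of the resulting covariant tunnel through the modified bridge seminorm $\bridgenorm{\gamma_n,\varsigma_n,\varkappa_n}{\cdot,\cdot}$ together with the covariant analogue of Theorem (\ref{tunnel-from-bridge-thm}), namely \cite[Proposition 4.5]{Latremoliere18b}. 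Your identification of the decisive step---uniform control, over $g$ in the growing balls $G_n\left[\frac{1}{\varepsilon}\right]$, of the comparison between $\alpha_n^g$ and $\alpha^{\varsigma_n(g)}$ via the compatibility of the character pairings and the total boundedness of Lipschitz balls---is exactly where the cited proof does its work.
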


\bibliographystyle{amsplain}
\bibliography{../thesis}
\vfill

\end{document}